\documentclass[9pt, english,english]{article}

\usepackage[applemac]{inputenc}

\usepackage{graphicx,epsfig,amsthm}
\usepackage{graphicx,epsfig,color}
\usepackage{amsmath,mathrsfs} 
\usepackage{amssymb} 
\usepackage{amsfonts}
\usepackage{mathtools}
\usepackage{epstopdf}
\usepackage{ifthen}
\usepackage{bbm}
\graphicspath{{eps/}}

\newcommand{\inR}{\in \mathbb{R}}
\newcommand{\inZ}{\in \mathbb{Z}}

\newcommand{\R}{ \mathbb{R}}
\newcommand{\Z}{ \mathbb{Z}}

\newcommand{\eqdef}{\stackrel{\vartriangle}{=}}

\newcommand{\Lop}{{\rm L}}
\newcommand{\Dop}{{\rm D}}


\def\V#1{{\boldsymbol{#1}}}         
\def\Spc#1{{\mathcal{#1}}}  
\def\M#1{{\bf{#1}}}  
\def\Op#1{{\mathrm{#1}}}  

\def\Proj{\mathrm{Proj}} 
\def\Identity{\mathrm{Id}} %


%



\renewcommand{\[}{\begin{equation}}
\renewcommand{\]}[1]{\label{eq:#1}\end{equation}}

\newtheorem{definition}{Definition}
\newtheorem{proposition}{Proposition}
\newtheorem{corollary}{Corollary}

\newtheorem{lemma}{Lemma}
\newtheorem{theorem}{Theorem}

\providecommand{\revise}[1]{{#1}}

\title{
Convex Optimization in Sums of Banach Spaces
\thanks{The research leading to these results has received funding from the Swiss National Science Foundation under Grant 200020\_184646 and from the European Research Council (H2020-ERC Project GlobalBioIm) under Grant 692726.
}
}

\author{
Michael Unser  and Shayan Aziznejad \thanks{Biomedical Imaging Group, \'Ecole polytechnique f\'ed\'erale de Lausanne (EPFL),
Station 17, CH-1015, Lausanne, Switzerland ({\tt michael.unser@epfl.ch}). }
 }

\begin{document}



\maketitle
\begin{abstract}
We characterize the solution of a broad class of convex optimization problems that address the reconstruction of a function from a finite number of linear measurements. 
The underlying hypothesis is that the solution is decomposable as a finite sum of components, where each component belongs to its own prescribed Banach space; moreover, the problem is regularized by penalizing some composite norm of the solution. We establish general conditions for existence and derive the generic parametric representation of the solution components. 
These representations fall into three categories depending on the underlying regularization norm: (i) a linear expansion in terms of predefined ``kernels'' when the component space is a reproducing kernel Hilbert space (RKHS),  
(ii) a non-linear (duality) mapping of a linear combination of measurement functionals when the component Banach space is strictly convex, and, (iii) an adaptive expansion in terms of a small number of atoms within a larger dictionary 
when the component Banach space  
is not strictly convex. Our approach generalizes and unifies a number of multi-kernel (RKHS) and sparse-dictionary learning techniques for compressed sensing available in the literature. It also yields the natural extension of the classical 
spline-fitting techniques in (semi-)RKHS to the abstract Banach-space setting.
\end{abstract}

\smallskip
\noindent \textbf{Keywords.} 
convex optimization, regularization, representer theorem, inverse problem, machine learning, Banach space, direct sum, composite norms\\[2ex]
\smallskip
\noindent \textbf{AMS subject classification.} 
46N10, 47A52, 65J20, 68T05

\section{Introduction}
\subsection{From RKHS to Banach Spaces}
\label{Sec:RKHS1}
Reproducing kernel Hilbert spaces {(RKHS) play} a central role in the classical formulations of machine learning, statistical estimation, and the resolution of linear inverse problems \cite{Scholkopf2002,Berlinet2004}. They go hand in hand with quadratic (or Tikhonov) regularization and 
Gaussian processes \cite{Kimeldorf1970a, Badoual2018}. The popularity of RKHS in machine learning stems from the fact that the minimization of Hilbertian norms results in parametric solutions that are linear combinations of kernels (basis functions) centered on the data points \cite{Scholkopf2002,DEvito2004,Wendland2005, Hofmann2008}, a remarkable property that is supported by the celebrated representer theorem \cite{Scholkopf2001}.

However, recent works that revolve around the concept of sparsity have demonstrated the advantages of 
considering Banach spaces instead of Hilbert spaces. In particular, compressed sensing relies on the minimization of $\ell_1$-norms. Under suitable conditions, this enables the exact recovery of a signal from a limited number of linear measurements \cite{Donoho2006, Candes2007, Bruckstein2009, Elad2010b,Eldar2012, Foucart2013}. Researchers have established representer theorems that explain the sparsifying effect of the $\ell_1$-norm \cite{Unser2016} and of its variants, including its continuous-domain counterpart: the $\Spc M$-norm (a.k.a.\ the total-variation norm of a measure) \cite{Candes2014,Duval2015,Flinth2018,Bredies2018}. Likewise, we proved in  \cite{Unser2017} that non-uniform splines of a type that is matched to the regularization operator are universal solutions of linear inverse problems with generalized total-variation regularization.
The main difference with the RKHS (or Tikhonov) framework is that the underlying basis functions---or kernels---are selected in an adaptive fashion and are not necessarily placed on the data points \cite{Gupta2018}. More recently, we have shown that the effect of such minimum-norm regularization could be characterized in full generality, as described in Theorem \ref{Theo:GeneralRepBanach} below  \cite{Unser2020b,Boyer2018}. The latter is an ``abstract'' representer theorem that applies to any Banach space $\Spc X'$ (e.g., $\ell_\infty(\Z)=\big(\ell_1(\Z)\big)'$ or $\Spc M(\R^d)=\big(C_0(\R^d)\big)'$) identifiable as the dual of some primary Banach space $\Spc X$.

\subsection{From Sums of RKHS to Sums of Banach Spaces}
It is a known fundamental property that a convex combination (resp., a tensor product) of reproducing kernels retains the desirable reproducing-kernel property (a.k.a.\ positive-definiteness) \cite{Aronszajn1950}. This has prompted researchers to extend the single-kernel Hilbertian methods of machine learning to a whole range of composite problems that involve direct products or (internal) direct sums of RKHS. (It turns out that direct sums and direct products are topologically equivalent, which is the reason why direct-product spaces are sometimes referred to as {\em external direct sums} \cite{Megginson1998}.)
Examples of practical developments that involve direct product/sums of RKHS are: kernel methods for vector-valued data \cite{Micchelli2005,Alvarez2012}, multi-kernel learning \cite{Gonen2011,Micchelli2005b}, multiscale approximation \cite{VanWyk2000}, and 
semi-parametric models of the form $\widetilde{f}=f+p_0$, where $f \in \Spc H$ (RKHS) and the second component $p_0\in {\rm span}\{p_n\}_{n=1}^{N_0}$ is finite-dimensional \cite{Scholkopf2002}. 
Likewise, the native spaces of variational splines have an inherent direct-sum structure because the underlying regularization functional is a Hilbertian semi-norm \cite{deBoor1966,Duchon1977, Bezhaev2001, Mosamam2010}.

While the Banach counterparts of these methods are still lacking for the most part, there is recent evidence that the use of over-complete dictionaries---in particular, unions of bases---is highly advantageous for the resolution of compressed-sensing problems with sparsity constraints \cite{Elad2002generalized,Gribonval2003sparse,Starck2004redundant,Fuchs2004sparse,Rauhut2008compressed,Candes2011compressed,Lin2013compressed}.
In the case where the dictionary is a single basis, there is a direct relation between this type of signal recovery and the kind of $\ell_1$-regularization problem mentioned in Section \ref{Sec:RKHS1} \cite{Foucart2013}. 
By taking inspiration from the large body of work already available for RKHS, the next promising step is therefore  to investigate this type of reconstruction problem from the unifying perspective of an optimization in a sum of Banach spaces.

%


%

\subsection{Mathematical Context}
The aim of this paper is to characterize the solution of a broad class of unconstrained-optimization problems that address the recovery of some unknown function $f$ from a finite number of
(possibly noisy) samples or, more generally, from a set of $M$ linear measurements
$z_m=\langle \nu_m, f\rangle, m=1,\dots,M$.
Beyond the fact that we leave the classical framework of RKHS, the specificity of our investigation is that
the optimization is performed over some Banach space $\Spc X'$ that has a direct-sum structure and/or is equipped with a composite-norm.

To set the stage, we 
recall the primary results of \cite{Unser2020b} and \cite{Boyer2018} and introduce our abstract optimization framework in the form {of} a single unified theorem.

 \begin{theorem}[General Banach representer theorem]
\label{Theo:GeneralRepBanach}
Let us consider the following setting: 
\begin{itemize}
\item A dual pair $(\Spc X,\Spc X')$ of Banach spaces.
\item The analysis subspace $\Spc N_\V \nu={\rm span}\{\nu_m\}_{m=1}^M \subset \Spc X$
with the $\nu_m$ being linearly independent.
\item The linear measurement operator 
$\V \nu: \Spc X' \to \R^M: f \mapsto \big(\langle \nu_1,f \rangle, \dots,\langle \nu_M, f \rangle\big)$.
\item The  proper, lower-semicontinuous, 
and convex loss functional $E: \R^M \times \R^M \to \R^{+}\cup \{+\infty\}$.
\item Some arbitrary {strictly} increasing and convex function $\psi: \R^+ \to \R^+$. 
\end{itemize}
Then, for any fixed $\V y\inR^M$, the solution set of the generic optimization problem
\begin{align}
\label{Eq:GenericOptimizationProb}
S=\arg \min_{f \in \Spc X'} \left( E\big(\V y, \V \nu(f)\big)+ \psi\left( \|f\|_{\Spc X'}\right)\right)
\end{align}
is nonempty, convex, and weak$^\ast$-compact. 

When $E$ is strictly convex, or  if it imposes the  equality constraint
{$\V y = \V \nu (f)$}, then any solution $f_0 \in
S\subset \Spc X'$ is an $(\Spc X',\Spc X)$-conjugate of a common
$\nu_0 \in \Spc N_\V \nu\subset \Spc X$, so that $S\subseteq J(\nu_0)$ (see Definition \ref{Def:DualMap}). Depending on the type of Banach space, this then results in the following description of the solution(s):
\begin{itemize}
\item If $\Spc X'$ is a Hilbert space and $\psi$ is strictly convex, then
the solution is unique and admits the linear representation with parameter $\V a\inR^M$ given as
\begin{align}
\label{Eq:f0lin}
f_0=\sum_{m=1}^{M} a_m \varphi_m,
\end{align}
with $\varphi_m=\Op J_\Spc X\{\nu_m\}\in \Spc X'$, where
$\Op J_\Spc X$ is the Riesz map $\Spc X \to \Spc X'$.
\item If $\Spc X'$ is a 
strictly convex Banach space and $\psi$ is strictly convex, then
the solution is unique and admits the parametric representation
\begin{align}
\label{Eq:f0nonlin}
f_0=\Op J_{\Spc X}\left\{\sum_{m=1}^{M} a_m \nu_m\right\},
\end{align}
where
$\Op J_\Spc X$ is the (nonlinear) duality operator $\Spc X \to \Spc X'$ (see Definition \ref{Def:DualMap}).
\item  Otherwise, when $\Spc X'$ is not strictly convex,  the solution set is the weak*-closure of the convex hull of its extremal points (see Definition \ref{Def:Extremal}) which can all be expressed as
\begin{align}
\label{Eq:f0extreme}
f_0=\sum_{k=1}^{K_0} c_k e_k
\end{align}
for some $K_0\le M$, $c_1,\dots,c_{K_0} \inR$, where $e_1,\dots,e_{K_0}\in \Spc X'$ are some extremal points of the unit ball $B_{\Spc X'}=\{x \in \Spc X: \|x\|_{\Spc X'}\le 1\}$.
\end{itemize}
\end{theorem}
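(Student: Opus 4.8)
\noindent The plan is to treat the three assertions in order: the topological properties of $S$, the common-conjugate structure, and the resulting parametric forms.

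I would begin with the direct method of the calculus of variations applied to the objective $\Spc C(f)=E\big(\V y,\V\nu(f)\big)+\psi\big(\|f\|_{\Spc X'}\big)$. Convexity is immediate: $\V\nu$ is linear and $E$ is convex, so the data term is convex; $\psi$ is convex and increasing while $\|\cdot\|_{\Spc X'}$ is convex, so the regularization term is convex. Hence $\Spc C$ is convex and $S$, as an argmin set, is convex. For the topology I would use that each $\nu_m\in\Spc X$ acts weak$^\ast$-continuously on $\Spc X'$, so $\V\nu$ is weak$^\ast$-continuous and (with $E$ lower-semicontinuous) the data term is weak$^\ast$-lsc; since $\|\cdot\|_{\Spc X'}$ is a supremum of weak$^\ast$-continuous functionals it is weak$^\ast$-lsc, and composing with the increasing lsc map $\psi$ preserves this. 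Coercivity follows because a strictly increasing convex $\psi$ admits an affine minorant of positive slope, so $\psi(\|f\|_{\Spc X'})\to+\infty$ as $\|f\|_{\Spc X'}\to\infty$ and the sublevel sets of $\Spc C$ are norm-bounded. Banach--Alaoglu then makes these weak$^\ast$-closed sublevel sets weak$^\ast$-compact, and the direct method yields that $S$ is nonempty and weak$^\ast$-compact.

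Next I would extract the common conjugate. First I claim all solutions share one measurement vector $\V z^\ast=\V\nu(f_0)$: under the equality constraint this is $\V z^\ast=\V y$, while for strictly convex $E$ the midpoint of two solutions with distinct measurement vectors would strictly lower the data term without raising the regularization term, contradicting optimality. Because $\psi$ is strictly increasing, the solutions then coincide with the minimizers of $\|f\|_{\Spc X'}$ subject to $\V\nu(f)=\V z^\ast$; call this minimal value $\eta$. This is a minimum-norm interpolation problem over the finitely many constraints carried by $\Spc N_{\V\nu}={\rm span}\{\nu_m\}$. By the Hahn--Banach duality for such problems, and since the dual supremum is taken over the finite-dimensional $\Spc N_{\V\nu}$ and is therefore attained, there exists a single $\nu_0\in\Spc N_{\V\nu}$ (normalized to $\|\nu_0\|_{\Spc X}=1$) for which complementary slackness gives $\langle\nu_0,f_0\rangle=\eta=\|f_0\|_{\Spc X'}$ for \emph{every} $f_0\in S$. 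This is precisely the statement that each $f_0$ is an $(\Spc X',\Spc X)$-conjugate of the common $\nu_0$, i.e.\ $S\subseteq J(\nu_0)$ in the sense of Definition \ref{Def:DualMap}.

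The three parametric forms then follow by specializing the conjugacy, writing $\nu_0=\sum_{m=1}^M a_m\nu_m$. In the Hilbert case $J=\Op J_{\Spc X}$ is the linear Riesz map, so $f_0=\Op J_{\Spc X}\{\nu_0\}=\sum_m a_m\varphi_m$, and strict convexity of the Hilbert norm together with strictly convex $\psi$ forces uniqueness. When $\Spc X'$ is strictly convex the duality map is single-valued, so $J(\nu_0)=\{\Op J_{\Spc X}\{\nu_0\}\}$ reduces to the nonlinear representation, again unique. In the remaining case $J(\nu_0)$ is a weak$^\ast$-closed face of $\eta B_{\Spc X'}$, hence weak$^\ast$-compact and convex, and so is $S=J(\nu_0)\cap\{f:\V\nu(f)=\V z^\ast\}$; Krein--Milman expresses $S$ as the weak$^\ast$-closed convex hull of its extreme points, and a Carath\'eodory/Dubins--Klee dimension count---using that $S$ is carved from the face by the $M$ linear constraints $\V\nu(f)=\V z^\ast$---shows every extreme point is a combination of at most $K_0\le M$ extremal atoms $e_k$ of $B_{\Spc X'}$, as in Definition \ref{Def:Extremal}. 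The main obstacle is this last step: producing a \emph{single} $\nu_0$ that norms the whole solution set hinges on attainment of the dual supremum and on complementary slackness holding simultaneously for all primal optima, while the bound $K_0\le M$ is the most delicate part. I would obtain it by a finite-dimensional reduction, observing that $\V\nu$ restricted to the face has range in $\R^M$, so an extreme point of $S$ must be extreme within its fiber, and then invoking the Dubins--Klee description of extreme points of the intersection of a convex set with $M$ affine constraints to limit the number of extremal atoms to $M$.
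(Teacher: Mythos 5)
Your proposal follows essentially the same route as the paper: the first two parts (direct method with Banach--Alaoglu for existence, then reduction to minimum-norm interpolation and Hahn--Banach duality over the finite-dimensional span $\Spc N_{\V\nu}$ to produce a single norming $\nu_0$) are exactly the argument the paper imports from its reference for Theorem \ref{Theo:GeneralRepBanach}, and the third part is handled, as in the paper's Appendix A, via Krein--Milman and Dubins' theorem on extreme points of a convex set cut by $M$ hyperplanes. One point deserves care, however: Dubins' theorem only bounds the number of extremal atoms by $M+1$, not $M$. The paper's Appendix A closes this gap with a perturbation argument that crucially exploits the fact that $\beta$ is the \emph{minimal} norm level on the constraint set---assuming $M+1$ linearly independent atoms, one moves along a direction $\sum_m\alpha_m f_m$ in the kernel of $\V\nu$, uses optimality to force $\sum_m\alpha_m=0$, and then contradicts extremality of $f_0$. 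Your appeal to a ``Dubins--Klee dimension count'' gestures at this but does not supply it; you would either need to reproduce that refinement or invoke the sharper Klee/Boyer-et-al.\ statement explicitly. A second, cosmetic remark: with your normalization $\|\nu_0\|_{\Spc X}=1$ the pair $(f_0,\nu_0)$ satisfies the sharp duality bound but not the norm-preservation condition of Definition \ref{Def:DualMap} unless $\eta=1$; rescaling $\nu_0$ to have norm $\eta$ fixes this.
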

The definitions and mathematical background for the interpretation of Theorem \ref{Theo:GeneralRepBanach} are provided in Section \ref{Sec:Foundations}.
The first part of Theorem \ref{Theo:GeneralRepBanach} up to \eqref{Eq:f0nonlin} is a retranscription of
\cite[Theorem 1]{Unser2019c}. The representation \eqref{Eq:f0extreme} for the case where the solution is non-unique is then deducible from Theorem 3.1 of \cite{Boyer2018}.
Since the latter theorem 
is more general than what is required here, we are providing an alternative proof of the result in Appendix A. It is important to note that the linear expansion in \eqref{Eq:f0extreme}
 is adaptive, meaning that the actual choice of $K_0$ and of the basis functions $e_k\in \Spc X'$ is data-dependent.
This is the main difference with the two other cases for which Theorem \ref{Theo:GeneralRepBanach}
provides an explicit description of the $M$-dimensional solution manifold. 
\subsection{Contributions}
While the abstract characterization in Theorem \ref{Theo:GeneralRepBanach} is remarkably general, it is  practical only for the cases in which the duality operator
$\Op J_{\Spc X}: \Spc X \to \Spc X'$ or the extremal points of the unit ball in $\Spc X'$ are known explicitly, for instance when $\Spc X'$ is a RKHS \cite{Aronszajn1950,Wahba1990,Scholkopf2001} or when the underlying norm is a variant of the $\ell_1$-norm that promotes sparsity \cite{Unser2016,Boyer2018,Bredies2018}. In this paper, we are extending the applicability
of Theorem \ref{Theo:GeneralRepBanach} 
by starting from basic building blocks (elementary Banach constituents) and by showing how these can be combined via the use of linear transforms and of direct sums to specify more complex regularization norms that can accommodate mixture models.

In Section \ref{Sec:CompositeNorms}, we present a refinement of Theorem \ref{Theo:GeneralRepBanach} for the cases where $\Spc X'$ admits the decomposition $\Spc X'=\Spc X'_1 \times \dots \times \Spc X'_N$ (direct product of Banach spaces) or $\Spc X'=\Spc X'_1 \oplus \dots \oplus \Spc X'_N$ (direct sum of Banach spaces).
The main result there is Theorem \ref{Theo:Directprod}, which explicitly tells us how the
underlying direct-product duality mappings and extremal points can be determined from the knowledge of the same entities for the simpler constituent spaces $\Spc X'_n$.
In Section \ref{Sec:SumsBanach}, we focus our attention on the direct-sum scenario
and illustrate the relevance of our framework to the practice of signal processing and data science.
In particular, we present an alternative variational formulation for sparse-dictionary learning 
and 
a new representer theorem for mixed-norm regularization problems (Theorem \ref{Theo:L2M}).

In Section \ref{Sec:SemiNorm}, we extend Theorem \ref{Theo:GeneralRepBanach} by replacing the original regularizing norm by a semi-norm that has a finite-dimensional null space $\Spc N_\V p={{\rm span}\{p_1,\dots,p_{N_0}\}}$ $ \subset \Spc X'$. The main result expressed by Theorem \ref{Theo:GeneralRepSemiBanach} is that this adds a null-space
component $p_0$ to the generic solution(s) of Theorem \ref{Theo:GeneralRepBanach}, which is the desired outcome. 
At the same time, it reduces the intrinsic dimension of the complementary component
$s_0=f_0-p_0$
from $M$ to $(M-N_0)$.
The mathematical analysis amounts to making sure that the solution exists and to then properly split the problem in order to decouple the determination of the two solution components.
The significance of our new Theorem \ref{Theo:GeneralRepSemiBanach} is to show that the traditional techniques of 
spline approximation \cite{deBoor1966, Duchon1977, Wahba1990, Bezhaev2001}, which involve semi-reproducing-kernel Hilbert spaces \cite{Mosamam2010}, are extendable to Banach spaces in general. Likewise, the non-strictly convex scenario in Theorem \ref{Theo:GeneralRepSemiBanach} is consistent with a number of recent results that have appeared in the literature for sparsity-promoting functionals \cite{Unser2017,Boyer2018,Bredies2018}, although the overlap is only partial due to the generality of our present formulation.

\section{Mathematical Foundations}
\label{Sec:Foundations}
A Banach space is a complete normed vector space. It is denoted by $(\Spc X,\|\cdot\|_{\Spc X})$ where $\Spc X$ stands for the vector space and $\|\cdot\|_{\Spc X}$ specifies the underlying norm
or, simply, by $\Spc X$ (for short). A Banach space $\Spc X$ has a unique topological dual
$\Spc X'$ which is itself a Banach space equipped with the dual norm $\|\cdot\|_{\Spc X'}$ (see \eqref{Eq:DualNorm} below).
Formally, an element $f$ of the dual space $\Spc X'$ is a continuous linear functional $f: \Spc X \to \R$. 
Likewise, since $\Spc X$ is embedded in the bidual space $\Spc X''=(\Spc X')'$, an element $\nu \in \Spc X$, which is therefore also included in $\Spc X''$, can be viewed as a continuous linear functional $\nu: \Spc X' \to \R$.
The bilateral character of this association is described by the duality product
\begin{align}
\Spc X \times \Spc X' \to \R: (\nu,f) \mapsto \langle \nu, f\rangle_{\Spc X \times \Spc X'}=\langle f,\nu\rangle_{\Spc X' \times \Spc X}\in \R,
\end{align}
which is a map that is linear and continuous in both arguments. 
To avoid notational overload, we shall henceforth drop the subscript in the specification of the duality product under the understanding that the first argument is a linear functional that acts on the second argument; for instance,  $\nu: f \mapsto \langle \nu, f\rangle$, where $f \in \Spc X'$ usually also has a concrete identification as a vector or a function.
Mathematically, the continuity of the duality product or, equivalently, the continuity of $\nu$ (or of $f$) viewed as a linear functional---is expressed by the generic duality bound
\begin{align}
\label{Eq:DualityBound}
\big|\langle \nu, f\rangle\big| 
\le \|\nu\|_{\Spc X} \|f\|_{\Spc X'} ,
\end{align}
which holds for any $(\nu,f) \in \Spc X \times \Spc X'$---for more details, refer to  \cite{Megginson1998,Rudin1991}. 
The
upper bound in \eqref{Eq:DualityBound} is consistent with the definition of the 
dual norm
\begin{align}
\label{Eq:DualNorm}
\|f\|_{\Spc X'}\eqdef\sup_{\nu\in\Spc X\backslash\{0\}} 
\frac{\langle f,\nu\rangle}{\|\nu\|_{\Spc X}}.
\end{align}
In fact, the latter identification suggests that
the
bound in \eqref{Eq:DualityBound} is tight---a 
property that is embodied in the fundamental notion of duality mapping \cite{Beurling1962}.
\begin{definition}[Duality mapping]
\label{Def:DualMap}
Let $(\Spc X,\Spc X')$ be a dual pair of Banach spaces.
Then, the elements $\nu^\ast\in \Spc X'$ and $\nu \in \Spc X$ form a $(\Spc X',\Spc X)$-conjugate pair
if they satisfy: \begin{enumerate}
\item Norm preservation: $\|\nu^\ast\|_{\Spc X'}=\|\nu\|_{\Spc X}$.
\item Sharp duality bound: $\langle \nu,\nu^\ast\rangle
=\|\nu\|_{\Spc X}\; \|\nu^\ast\|_{\Spc X'}.$
\end{enumerate}
For any given $\nu \in \Spc X$, the set of admissible conjugates defines the duality mapping 
\begin{align}
J(\nu)=\{\nu^\ast \in \Spc X': \|\nu^\ast\|_{\Spc X'}=\|\nu \|_{\Spc X}  \mbox{ and }  \langle \nu, \nu^\ast \rangle
=\|\nu\|_{\Spc X}\; \|\nu^\ast\|_{\Spc X'}\},
\end{align}
which is a nonempty subset of $\Spc X'$.
Whenever the duality mapping is single-valued (for instance, when $\Spc X'$ is strictly convex), one 
also defines the duality operator
$\Op J_\Spc X: \Spc X \to \Spc X'$, which is such that
$\nu^\ast=\Op J_\Spc X\{\nu \}$.
 \end{definition}
 \begin{definition} 
\label{Def:StrictConvex}
A Banach space $\Spc X$ (or its associated norm $\|\cdot\|_{\Spc X}$) is said to be 
{\it strictly convex} if,
for all $f_1,f_2 \in \Spc X$ such that $\|f_1\|_{\Spc X}=\|f_2\|_{\Spc X}=1$ and $f_1\ne f_2$, one has that ${\|\lambda f_1+(1-\lambda)f_2\|_{\Spc X}}<1$
for any $\lambda\in(0,1)$.
\end{definition}

The dual mapping is a powerful mathematical tool that facilitates the investigation of optimization problems in Banach spaces. A primary reference on the topic, which includes the characterization of $\Op J_\Spc X$ for the classical $L_p$ spaces, is \cite{Cioranescu2012}.

Note that the duality operator $\Op J_\Spc X$ is bijective
when $\Spc X$ is reflexive and strictly convex \cite{Cioranescu2012,Schuster2012}, in which case
$\Op J^{-1}_\Spc X=\Op J_{\Spc X'}: \Spc X' \to \Spc X''=\Spc X$.
It can therefore be viewed as the natural generalization of the celebrated Riesz map \cite{Riesz1927, Rudin1973}, which
describes the linear isometric mapping of a Hilbert space into its dual.
The important difference, however, is that the operator $\Op J_\Spc X$ is generally nonlinear.
In fact, it is linear if and only if $\Spc X$ is a Hilbert space,
in which case it coincides with the Riesz map $\Spc X \to \Spc X'$ \cite{Cioranescu2012,Unser2020b}. This explains the
distinction between the first and second scenarios in Theorem \ref{Theo:GeneralRepBanach}:
The simplification in \eqref{Eq:f0lin}
 occurs because we are able to move the operator inside the sum of \eqref{Eq:f0nonlin}. 
 
The final statement in Theorem \ref{Theo:GeneralRepBanach}, which applies to the cases where the solution is non-unique, involves the notion of extremal points.
  \begin{definition}[Extremal points]
 \label{Def:Extremal}
Let $C$ be a convex set of a Banach space $\Spc X$.
The extremal points of $C$ are the points $f\in C$ such that, if there exist $f_1,f_2 \in C$
and $t\in(0,1)$ such that
$f=t f_1 + (1-t) f_2$, then it necessarily holds that $f=f_1=f_2$. The set of these extremal points is denoted by ${\rm Ext}(C)$.
\end{definition}
While the characterization given by \eqref{Eq:f0extreme} is always valid, 
it is practical only when the Banach space
 $\Spc X'$ has a unit ball $B_{\Spc X'}$ with comparatively much fewer extremal points than boundary points.
 The prototypical case is $\ell_1(\Z)$, whose extremal points ${\rm Ext}(B_{\ell_1(\Z)})=\{\pm\delta[\cdot-m]\}_{m\inZ}$ (the signed Kronecker impulses shifted by $m$) are indexable, while its boundary points
$\{u[\cdot] \in \ell_1(\Z): \|u\|_{\ell_1}=1\}$ are uncountable.
The extremal points of $B_{\Spc X'}$ 
can then be interpreted as the elements of a constrained dictionary.
This means that $f_0$ in \eqref{Eq:f0extreme} will be constructed by adaptively 
selecting a few elements $e_k$ (with $K_0 \ll M$ when the solution is strongly regularized)
in a dictionary of preferred elementary solutions. This is a very popular approach in compressed sensing \cite{Donoho2006, Candes2007, Elad2010b,Chandrasekaran2012}.

Our final tool is a 
transformation mechanism that generates appli\-cation-specific Banach spaces
from some primary ones whose basic properties (e.g., the duality mapping and extremal points of the unit ball) are known.

 \begin{proposition}[Isometric isomorphism]
\label{Prop:IsometricIso} Let $\Spc X$ be a primary Banach space and $\Op T: \Spc X \to \Op T(\Spc X)$ a linear operator that is injective on $\Spc X$.
Then, we have the following properties:
\begin{enumerate}
\item The space $\Spc Y=\Op T(\Spc X)=\{y=\Op T x: x \in \Spc X\}$, equipped with the norm $\|y\|_{\Spc Y}=\|\Op T^{-1}\{y\}\|_{\Spc X}$, is a Banach space that is isometrically isomorphic to
$\Spc X$. In other words, the operators
$\Op T: \Spc X \to \Spc Y$ and $\Op T^{-1}: \Spc Y \to \Spc X$ are isometries.

\item The continuous dual of $\Spc Y=\Op T(\Spc X)$ is $\Spc Y'=\Op T^{-1\ast}(\Spc X')$, equipped with the norm 
{$\|y^\ast\|_{\Spc Y'}=\|\Op T^\ast  \{y^\ast\} \|_{\Spc X'}$}.

\item The elements $y^\ast\in \Spc Y'$ and $y\in \Spc Y=\Op T(\Spc X)$ 
form a conjugate pair if and only if
{ $x^\ast = \Op T^{\ast} \{y^\ast\}\in \Spc X'$ and $x = \Op T^{-1} \{y\} \in \Spc X$} are themselves $(\Spc X',\Spc X)$-Banach conjugates.

\item The element $u\in \Spc Y$ is an extremal point of the unit ball in $\Spc Y=\Op T(\Spc X)$
if and only if $u=\Op T\{e\}$, where $e\in \Spc X$ is an extremal point of the unit ball in $\Spc X$.
\item If $\Spc X$ is a Hilbert space, then the spaces 
$\Spc Y=\Op T\big(\Spc X\big)$ and 
{$\Spc Y'=\Op T^{-1\ast}\big(\Spc X'\big)$} are Hilbert spaces as well. The corresponding Riesz map is 
 $\Op J_\Spc Y=\Op T^{-1\ast}\Op J_\Spc X\Op T^{-1}:\Spc Y \to \Spc Y'$, where
  $\Op J_\Spc X: \Spc X\to \Spc X'=\Op J_\Spc X(\Spc X)$ is the Riesz map of the primary space.
\end{enumerate}
\end{proposition}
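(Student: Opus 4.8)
The plan is to prove the five items in sequence, since each later item relies on the preceding ones; the common engine throughout is the defining relation $\|y\|_{\Spc Y}=\|\Op T^{-1}\{y\}\|_{\Spc X}$ together with the adjoint pairing $\langle y^\ast, y\rangle_{\Spc Y'\times\Spc Y}=\langle \Op T^\ast y^\ast, \Op T^{-1}y\rangle_{\Spc X'\times\Spc X}$, which I would establish first as a standing identity.

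For item 1, the map $\Op T$ is injective by hypothesis and surjective onto $\Spc Y=\Op T(\Spc X)$ by definition, so $\Op T^{-1}:\Spc Y\to\Spc X$ is well-defined and linear. The norm axioms for $\|\cdot\|_{\Spc Y}$ transfer directly from those on $\Spc X$ through the linear bijection $\Op T^{-1}$ (homogeneity, the triangle inequality, and positive-definiteness all pull back verbatim). That $\Op T$ and $\Op T^{-1}$ are isometries is then immediate from the definition: $\|\Op T x\|_{\Spc Y}=\|\Op T^{-1}\Op T x\|_{\Spc X}=\|x\|_{\Spc X}$. Completeness of $\Spc Y$ follows because a Cauchy sequence in $\Spc Y$ maps under $\Op T^{-1}$ to a Cauchy sequence in the complete space $\Spc X$, whose limit maps back under $\Op T$ to the desired limit in $\Spc Y$; this is where I would be careful that $\Op T$ being an isometry guarantees the convergence transfers cleanly.

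For item 2, the idea is that a continuous linear functional $y^\ast$ on $\Spc Y$ corresponds, via precomposition with the isometry $\Op T$, to the functional $x\mapsto\langle y^\ast,\Op T x\rangle$ on $\Spc X$, which identifies $\Spc Y'$ with $\Spc X'$ through the map $y^\ast\mapsto \Op T^\ast y^\ast$. I would verify that this correspondence is the isometry claimed by computing $\|y^\ast\|_{\Spc Y'}=\sup_{y\neq 0}\frac{\langle y^\ast,y\rangle}{\|y\|_{\Spc Y}}=\sup_{x\neq 0}\frac{\langle \Op T^\ast y^\ast, x\rangle}{\|x\|_{\Spc X}}=\|\Op T^\ast y^\ast\|_{\Spc X'}$, where the middle step substitutes $y=\Op T x$ and uses the isometry from item 1. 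This justifies writing $\Spc Y'=\Op T^{-1\ast}(\Spc X')$ with the stated norm. Items 3 and 4 are then corollaries: for item 3 I would check the two defining conditions of Definition 2 (norm preservation and the sharp duality bound) and observe that each is preserved verbatim under the isometries of items 1 and 2, so that $(y^\ast,y)$ is conjugate exactly when $(\Op T^\ast y^\ast,\Op T^{-1}y)$ is; for item 4, since $\Op T$ maps the unit ball of $\Spc X$ bijectively and affinely onto the unit ball of $\Spc Y$, it carries extremal points to extremal points (a convex-combination decomposition of $\Op T e$ pulls back through the linear $\Op T^{-1}$ to one of $e$, and conversely).

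Item 5 is the one place where the structure is genuinely richer, and I expect it to be the main obstacle. When $\Spc X$ is Hilbert, I must first argue that the transported norm $\|\cdot\|_{\Spc Y}$ satisfies the parallelogram law—this follows because $\Op T^{-1}$ is linear, so the law pulls back from $\Spc X$—hence $\Spc Y$ is Hilbert, and similarly $\Spc Y'$. The delicate part is verifying the explicit formula $\Op J_\Spc Y=\Op T^{-1\ast}\Op J_\Spc X\Op T^{-1}$ for the Riesz map. Here I would invoke item 3 with the duality operator now single-valued (Definition 1): the conjugate of $y$ is $\Op J_\Spc Y\{y\}$, and it must correspond under item 3 to $\Op J_\Spc X\{\Op T^{-1}y\}$, i.e. $\Op T^\ast\Op J_\Spc Y\{y\}=\Op J_\Spc X\{\Op T^{-1}y\}$; solving for $\Op J_\Spc Y$ gives the claimed composition. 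I would double-check the linearity and the placement of $\Op T^\ast$ versus $\Op T^{-1\ast}$, since that is the natural spot for a transposition slip.
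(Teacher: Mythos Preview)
Your proposal is correct and follows essentially the same route as the paper: establish the isometry $\|\Op T x\|_{\Spc Y}=\|x\|_{\Spc X}$ first, then derive items 2--5 from the adjoint identity $\langle y^\ast, y\rangle_{\Spc Y'\times\Spc Y}=\langle \Op T^\ast y^\ast, \Op T^{-1}y\rangle_{\Spc X'\times\Spc X}$. The paper is terser (it dismisses items 2--4 in one sentence plus that single identity), whereas you spell out the dual-norm supremum, the extremal-point transfer, and the completeness argument; for item 5 the paper defines the transported inner product $(y_1,y_2)_{\Spc Y}=(\Op T^{-1}y_1,\Op T^{-1}y_2)_{\Spc X}$ directly rather than invoking the parallelogram law, and reads off $\Op J_{\Spc Y}$ from $(y_1,y_2)_{\Spc Y}=\langle \Op J_{\Spc Y}\{y_1\},y_2\rangle$ rather than via item 3---but these are cosmetic variations, not different arguments.
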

\begin{proof}
The hypothesis that $\Op T$ is injective on $\Spc X$ implies the existence of a linear map $\Op T^{-1}$ (inverse operator) such that
$\Op T^{-1}\Op T \{x\}=x$ for all $x \in \Spc X$. Since $\Op T$ is linear and one-to-one, 
the functional $y \mapsto \|\Op T^{-1}y\|_{\Spc X}$ is a {\it bona fide} norm
on $\Spc Y$. Moreover,  
from the definition of the $\Spc Y$-norm, we have that
\begin{equation}\label{Eq:Isometry}
\|\Op T\{x_m \} -\Op T\{x_n\} \|_{\Spc Y}=\|\Op T\{x_m -x_n\}\|_{\Spc Y}=\|\Op T^{-1}\Op T\{x_m-x_n\}\|_{\Spc X}  = \|x_m-x_n\|_{\Spc X},
\end{equation}
for any $x_m,x_n \in \Spc X$. Together with the bijectivity of $\Op T$, we deduce that $\Op T$ is an isomorphism between $\Spc X$ and $\Spc Y$. Hence, $\Spc Y$ inherits the topological structure of $\Spc X$. This proves that $\Spc Y$ is indeed a Banach space.

The other properties are immediate consequences of the underlying isometry and the definition of the adjoint, which translate into
\begin{align*}
\langle x_1^\ast, x_2\rangle_{\Spc X' \times \Spc X}=\langle x_1^\ast,\Op T^{-1} \Op T \{x_2\}\rangle_{\Spc X' \times \Spc X}=\langle \Op T^{-1\ast}\{x_1^\ast\}, \Op T \{x_2\}\rangle_{\Spc Y' \times \Spc Y}=
\langle y_1^\ast, y_2\rangle_{\Spc Y' \times \Spc Y}
\end{align*}
for any $(x_1^\ast,x_2) \in \Spc X' \times \Spc X$.

In particular, if $\Spc X$ is a Hilbert space with inner product
$(\cdot,\cdot)_\Spc X$, then $x^\ast=\Op J_\Spc X \{x\} \in \Spc X'$ so that
$\langle x^\ast, x\rangle_{\Spc X' \times \Spc X}=\|x\|^2_{\Spc X}=(x,x)_{\Spc X}$.
It follows that
$\Spc Y=\Op T\big( \Spc X\big)$ is a Hilbert space equipped with the inner product $(y_1,y_2)_{\Spc Y}
=(\Op T^{-1}y_1,\Op T^{-1}y_2)_{\Spc X}$. Correspondingly, the dual space $\Spc Y'=\Op T^{-1\ast}\big( \Spc X'\big)$ is the Hilbert space equipped with the inner product
$(y^\ast_1,y^\ast_2)_{\Spc Y'}
=(\Op T^{\ast}y^\ast_1,\Op T^{\ast}y^\ast_2)_{\Spc X'}$. Moreover, we have that
$(y_1,y_2)_{\Spc Y}=\langle \Op J_\Spc Y\{y_1\},y_2\rangle_{\Spc Y' \times \Spc Y}=(\Op J_\Spc Y\{y_1\},\Op J_\Spc Y\{y_2\})_{\Spc Y'}$,
the underlying duality operator (Riesz map) being
$\Op J_\Spc Y=\Op T^{-1\ast}\Op J_\Spc X\Op T^{-1}: \Spc Y \to \Spc Y'$. 
\end{proof}

\section{Composite Norms and Direct-Sum Spaces}
\label{Sec:CompositeNorms}
In order to offer flexibility in the specification of direct-product or direct-sum topologies, we introduce the finite-dimensional space $\Spc Z=(\R^N,\|\cdot\|_\Spc Z)$.
The underlying norm is said to be {\em monotone} if
$$
\|(a_1, \dots,a_N)\|_{\Spc Z} \le \|(b_1, \dots,b_N)\|_{\Spc Z} 
$$
whenever $0 \le |a_n|\le  |b_n|$  for each $n=1,\dots,N$, and, {\em absolute} if 
$\|\V z\|_{\Spc Z}=\|(z_n)\|_{\Spc Z}=\|(|z_n|)\|_{\Spc Z}$ for any $\V z\inR^N$. It is also known that a norm is monotone if and only if it is absolute  \cite[Theorem 2]{Bauer1961}. 
For instance, the latter property is obviously satisfied for $\|\cdot\|_\Spc Z=\|\cdot\|_p$ with $p\ge1$, as well as for any weighted version thereof.
Moreover, the dual of an absolute norm is again absolute \cite[Theorem 1]{Bauer1961}.
Given a series $\Spc X_1, \dots, \Spc X_N$ of Banach spaces,
we then write $(\Spc X_1\times\dots \times \Spc X_N)_{\Spc Z}$ for the direct-product space
equipped with the composite norm
\begin{align}
\label{Eq:CompNorm1}
\|(x_1,\dots,x_N)\|=\|(\|x_1\|_{\Spc X_1}, \dots, \|x_N\|_{\Spc X_N})\|_\Spc Z.
\end{align}
The construction is straightforward as the direct-product space automatically inherits the Banach property of its components. 

Likewise, one can construct (internal) direct-sum spaces via the summation of complemented Banach constituents.
\begin{definition}
A series $\Spc X_1, \dots, \Spc X_N$ of Banach subspaces of $\Spc X$ is said to be complemented if
$\Spc X=\Spc X_1 + \dots + \Spc X_N= \{x=x_1+ \dots+ x_N: x_n \in \Spc X_n, n=1,\dots,N\}$
(as a set) and
$\Spc X_{n_1} \cap \sum_{n\ne n_1}\Spc X_{n}=\{0\}$ when $n_1=1,\dots,N$.
\end{definition}
In that scenario, any $x \in \Spc X$ has a unique representation as
$x=x_1 + \dots + x_N$ with $x_n=\Proj_{\Spc X_n}\{x\} \in \Spc X_n$,
where $\Proj_{\Spc X_n}: \Spc X \to \Spc X_n$ is the corresponding projection operator.
We then designate $\Spc X=(\Spc X_1\oplus\dots \oplus \Spc X_N)_{\Spc Z}$ as the (internal) direct-sum space
equipped with the norm
\begin{align}
\label{Eq:CompNorm2}
\|x\|_{\Spc X}=\|(\|\Proj_{\Spc X_1}\{x\}\|_{\Spc X_1}, \dots, \|\Proj_{\Spc X_N}\{x\}\|_{\Spc X_N})\|_{\Spc Z}.
\end{align}
We observe that \eqref{Eq:CompNorm2} is compatible with \eqref{Eq:CompNorm1} 
because $\Proj_{\Spc X_{n_1}}: \Spc X \to \Spc X_{n_1}$ is such that
$$
\Proj_{\Spc X_{n_1}}\{x_{n}\}=\left\{ \begin{array}{ll}
  x_{n_1}, & \mbox{for } n=n_1    \\
 0, &  \mbox{ otherwise}   \\
 \end{array}
\right. 
$$
for any $x_{n} \in \Spc X_n$.
This identification, together with the unicity of the sum decomposition, implies that $(\Spc X_1\oplus\dots \oplus \Spc X_N)_{\Spc Z}$ is a Banach space that 
 is isometrically isomorphic to $(\Spc X_1\times\dots \times \Spc X_N)_{\Spc Z}$.

\begin{lemma}
\label{Theo:DirectSumOpt} Let $(\Spc X'_1,\Spc X_1),\dots,(\Spc X'_N,\Spc X_N)$ be a series
of dual pairs of Banach spaces and $\|\cdot\|_{\Spc Z}$ a norm on $\R^N$ that is absolute.
Then, we have the following properties:
\begin{enumerate}
\item The continuous dual of $\Spc X=(\Spc X_1\times\dots \times \Spc X_N)_{\Spc Z}$ is the 
direct-product space $\Spc X'=(\Spc X'_1\times\dots \times \Spc X'_N)_{\Spc Z'}$.

\item The elements $y=(y_1,\dots,y_N)\in \Spc X'$ and $x=(x_1,\dots,x_N)\in \Spc X$ 
form a conjugate pair if and only if
$y_n=\alpha_n x_n^\ast$, where $x^\ast_n\in \Spc X_n'$ is a Banach conjugate of $x_n\in \Spc X_n$
and $\alpha_n\inR^+$ is given by
\begin{align}
\label{Eq:DirectSumConj}
\alpha_n=\left\{
\begin{array}{ll}
 \frac{z^\ast_n}{\|x_n\|_{\Spc X_n}}>0, & x_n\ne 0    \\
 0, &   \mbox{otherwise}   \\
\end{array}\right.
\end{align}
with $\V z^\ast=(z^\ast_n)
\in \Spc Z'$ a Banach conjugate of
$\V z=(\|x_1\|_{\Spc X_1},\dots,\|x_N\|_{\Spc X_N}) \in \Spc Z$.

\item The element $e=(e_1,\dots,e_N)\in \Spc X$ is an extremal point of the unit ball in $\Spc X$
if and only if $(\|e_1\|_{\Spc X_1},\dots,\|e_N\|_{\Spc X_N})$ is an extremal point of the unit ball in $\Spc Z$, and for each $1\le n \le N$ with $e_n\ne0$, $\frac{e_n}{\|e_n\|_{\Spc X_n}}$ is an extremal point of the unit ball of $\Spc X_n$.
\item If the $\Spc X_n$ are complemented Banach subspaces of the (sum) space $\Spc X_{\rm sum}$, then
the continuous dual of $\Spc X_{\rm sum}=(\Spc X_1\oplus\dots \oplus \Spc X_N)_{\Spc Z}$ is the 
direct-sum Banach space $\Spc X'_{\rm sum}=(\Spc X'_1\oplus\dots \oplus \Spc X'_N)_{\Spc Z'}$, which is isometrically 
isomorphic to the direct-product space $\Spc X'$ in Item 1. Consequently, the properties in Item 2 and 3 also apply, with the convention that $x_n=\Proj_{\Spc X_n}\{x\}$
and $y_n=\Proj_{\Spc X'_n}\{y\}$ for $n=1,\dots,N$.
\end{enumerate}

\end{lemma}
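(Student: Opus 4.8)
\section*{Proof proposal}

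The plan is to treat the four items in order, reducing the direct-sum statement (Item~4) to the direct-product statements (Items~1--3) via the isometric isomorphism already recorded before the lemma, so that the real work concerns the product space $\Spc X=(\Spc X_1\times\dots\times\Spc X_N)_{\Spc Z}$. For Item~1 I would first observe that every tuple $y=(y_1,\dots,y_N)$ with $y_n\in\Spc X_n'$ acts on $\Spc X$ through the pairing $\langle y,x\rangle=\sum_{n=1}^N\langle y_n,x_n\rangle$, and conversely that every continuous functional on the product splits this way by restricting it to the canonical copies of the $\Spc X_n$. The quantitative part is to identify the dual norm: applying the componentwise duality bound $|\langle y_n,x_n\rangle|\le\|y_n\|_{\Spc X_n'}\|x_n\|_{\Spc X_n}$ and then the finite-dimensional duality bound $\sum_n b_n a_n\le\|(a_n)\|_{\Spc Z}\|(b_n)\|_{\Spc Z'}$ to the nonnegative vectors $a_n=\|x_n\|_{\Spc X_n}$, $b_n=\|y_n\|_{\Spc X_n'}$ gives $|\langle y,x\rangle|\le\|x\|_{\Spc X}\,\|(\|y_n\|_{\Spc X_n'})\|_{\Spc Z'}$. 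Tightness (hence equality of the dual norm with the $\Spc Z'$-composite norm) follows by choosing the $x_n$ to nearly saturate each componentwise bound and the radial vector $(a_n)$ to saturate the $\Spc Z$-duality; here the hypothesis that $\|\cdot\|_{\Spc Z}$ is absolute, so that $\|\cdot\|_{\Spc Z'}$ is again absolute and only nonnegative radial vectors matter, is what makes the reduction to nonnegative vectors legitimate.

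For Item~2 I would exploit the chain of inequalities underlying Item~1. Writing $\V z=(\|x_n\|_{\Spc X_n})$ and $\V w=(\|y_n\|_{\Spc X_n'})$, a conjugate pair must satisfy $\sum_n\langle y_n,x_n\rangle=\|x\|_{\Spc X}\|y\|_{\Spc X'}=\|\V z\|_{\Spc Z}\|\V w\|_{\Spc Z'}$ together with $\|\V w\|_{\Spc Z'}=\|\V z\|_{\Spc Z}$. Forcing equality in $\sum_n\langle y_n,x_n\rangle\le\sum_n\|y_n\|_{\Spc X_n'}\,\|x_n\|_{\Spc X_n}\le\|\V z\|_{\Spc Z}\|\V w\|_{\Spc Z'}$ yields simultaneously (i) equality in each componentwise bound, which means $y_n/\|y_n\|_{\Spc X_n'}$ is conjugate to $x_n/\|x_n\|_{\Spc X_n}$ and hence $y_n=\alpha_n x_n^\ast$ for a conjugate $x_n^\ast$ of $x_n$ with $\alpha_n=\|y_n\|_{\Spc X_n'}/\|x_n\|_{\Spc X_n}$, and (ii) equality in the $\Spc Z$-duality bound, which together with norm preservation says precisely that $\V w=\V z^\ast$ is a Banach conjugate of $\V z$ in $\Spc Z$, giving the stated formula $\alpha_n=z_n^\ast/\|x_n\|_{\Spc X_n}$. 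The converse is the same computation read backwards. The bookkeeping to watch is the degenerate coordinates $x_n=0$ (set $\alpha_n=0$) and $y_n=0$, which must be excluded from the normalizations.

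Item~3 is where I expect the real difficulty, and the whole argument runs on the monotonicity of $\|\cdot\|_{\Spc Z}$ (equivalently its absoluteness). For necessity, assuming $e$ extremal, I would rule out non-extremality of the radial vector $\V r=(\|e_n\|_{\Spc X_n})$ by lifting any segment through $\V r$ in $B_{\Spc Z}$ to a segment through $e$: on the support $S=\{n:e_n\ne0\}$ one rescales the components $e_n$, while coordinates with $e_n=0$ are perturbed by $\pm w_n$ in $\Spc X_n$, and in both cases monotonicity guarantees the perturbed tuples stay in $B_{\Spc X}$. A similar lift rules out a non-extremal $e_n/\|e_n\|_{\Spc X_n}$. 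For sufficiency, given $e=\tfrac12(f+g)$ with $f,g\in B_{\Spc X}$, the triangle inequality gives $\V r\le\tfrac12(\V p+\V q)$ componentwise with $\V p=(\|f_n\|_{\Spc X_n})$, $\V q=(\|g_n\|_{\Spc X_n})$, and monotonicity forces all intermediate norms to equal $1$. The key step is to upgrade the inequality $\V r\le\V s:=\tfrac12(\V p+\V q)$ to an equality: since $|2\V r-\V s|\le\V s$ componentwise one has $2\V r-\V s\in B_{\Spc Z}$, so $\V r=\tfrac12\big(\V s+(2\V r-\V s)\big)$ and extremality of $\V r$ gives $\V s=\V r$, whence $\V p=\V q=\V r$ by a second use of extremality. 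With $\|f_n\|_{\Spc X_n}=\|g_n\|_{\Spc X_n}=\|e_n\|_{\Spc X_n}$ in hand, extremality of each $e_n/\|e_n\|_{\Spc X_n}$ (and $f_n=g_n=0$ off $S$) yields $f=g=e$.

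Finally, Item~4 is immediate: the map $\Op T:(x_1,\dots,x_N)\mapsto x_1+\dots+x_N$ is, by the complementation hypothesis and the unicity of the sum decomposition, a bijective isometry from the product space onto $\Spc X_{\rm sum}$ with inverse $x\mapsto(\Proj_{\Spc X_1}x,\dots,\Proj_{\Spc X_N}x)$. Proposition~\ref{Prop:IsometricIso} then transports the dual-space identity, the conjugate-pair description, and the extremal-point description from Items~1--3 to the direct sum, with the stated conventions $x_n=\Proj_{\Spc X_n}\{x\}$ and $y_n=\Proj_{\Spc X_n'}\{y\}$.
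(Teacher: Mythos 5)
Your treatment of Items 1, 2, and 4 follows the paper's proof essentially verbatim: the same chain of inner (componentwise) and outer ($\Spc Z$-duality) inequalities establishes the dual-norm identity and, by tracking where both inequalities saturate, the conjugate-pair characterization; Item 4 is reduced to the product case via the isometric isomorphism exactly as in the paper. The one genuine divergence is Item 3. The paper does not prove it: it cites Dowling and Saejung for the extremal-point characterization of absolute-norm products, invoking Bauer's theorem only to note that monotonicity is implied by absoluteness. You instead supply a direct argument, and it is sound: the necessity direction lifts any nontrivial segment through the radial vector $\V r=(\|e_n\|_{\Spc X_n})$ (rescaling on the support, inserting $\pm p_n w_n$ with $q_n=-p_n$ off the support, with absoluteness guaranteeing membership in $B_{\Spc X}$), and the sufficiency direction correctly upgrades the componentwise inequality $\V r\le\V s=\tfrac12(\V p+\V q)$ to an equality via the midpoint trick $\V r=\tfrac12\bigl(\V s+(2\V r-\V s)\bigr)$ with $\lvert 2\V r-\V s\rvert\le\V s$ and monotonicity, after which extremality of $\V r$ in $B_{\Spc Z}$ and of each normalized component finishes the argument. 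What your route buys is self-containedness and an explicit view of where absoluteness/monotonicity enters; what the paper's citation buys is brevity and a pointer to the general result. One small point worth making explicit if you write this up: in the tightness step of Item 1, the maximizer $\V\alpha$ of $\V y^T\V\alpha$ over $B_{\Spc Z}$ can be taken componentwise nonnegative precisely because $\V y\ge 0$ and the norm is absolute; you flag this correctly, and the paper relies on it implicitly when it writes $\|x_\epsilon\|_{\Spc X}=\|(|\alpha_1|,\dots,|\alpha_N|)\|_{\Spc Z}=1$.
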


\begin{proof}
An element $y=(y_1,\dots,y_N)$ of $\Spc X'$ is identified with the linear functional
\begin{align}
\label{Eq:multifunctional}
x=(x_1,\dots,x_N) \mapsto \langle y, x\rangle_{\Spc X' \times \Spc X}=
\sum_{n=1}^N \langle y_n, x_n\rangle_{\Spc X'_n \times \Spc X_n}.
\end{align}
The first property is a basic result in the theory of Banach spaces \cite[Theorem 1.10.13]{Megginson1998} when
the outer norm is Euclidean with
$\Spc Z=\Spc Z'=(\R^N,\|\cdot\|_2)$.
The present setting is more general so that we need to  prove that the dual norm of $y=(y_1,\ldots,y_N) \in \Spc X'$ is precisely 
\begin{equation}\label{Eq:DualNorm2}
\|y\|_{\Spc X'}= \sup_{\|x\|_{\Spc X} =1} \langle y,x \rangle  = 
\revise{
\left\| (\|y_1\|_{\Spc X'_1},\ldots,\|y_N\|_{\Spc X'_N})\right\|_{\Spc Z'}
}.
\end{equation}
Since the spaces $(\Spc X'_n,\Spc X_n)$ form dual pairs, we have the generic duality inequalities
\begin{align}
\label{Eq:innerdualitybound0}
\langle y_n, x_n\rangle_{\Spc X'_n \times \Spc X_n}\le \left| \langle y_n, x_n\rangle_{\Spc X'_n \times \Spc X_n}\right|\le \|y_n\|_{\Spc X'_n}\|x_n\|_{\Spc X_n}
\end{align}
with equality if and only if $y_n=\alpha_n x^\ast_n$ for some $\alpha_n \inR^+$. 
This implies that, for any $(y,x) \in \Spc X' \times \Spc X$, we have that
\begin{align}
\label{Eq:innerdualitybound}
\langle y, x\rangle_{\Spc X' \times \Spc X}=\sum_{n=1}^N \langle y_n, x_n\rangle_{\Spc X'_n \times \Spc X_n}\le \sum_{n=1}^N \big|\langle y_n, x_n\rangle_{\Spc X'_n \times \Spc X_n}\big| \le \sum_{n=1}^N \|y_n\|_{\Spc X'_n}\|x_n\|_{\Spc X_n}
\end{align}
Likewise, by setting $\V y=(\|y_1\|_{\Spc X'_1},\dots,\|y_N\|_{\Spc X'_N}) \in \Spc Z'$ and
$\V z=(\|x_1\|_{\Spc X_1},\dots,\|x_N\|_{\Spc X_N})\in \Spc Z$, we 
write the complementary duality inequality
\begin{align}
\label{Eq:outerdualitybound}
\sum_{n=1}^N \|y_n\|_{\Spc X'_n}\|x_n\|_{\Spc X_n}
=\langle \V y, \V z \rangle_{\Spc Z' \times \Spc Z}\le 
\left|\langle \V y, \V z \rangle_{\Spc Z' \times \Spc Z}\right| \le \|\V y\|_{\Spc Z'} \|\V z\|_{\Spc Z}.
\end{align}
By observing that
$\|\V z\|_{\Spc Z}=\|x\|_{\Spc X}$ and combining these inequalities, we get that
\begin{align}
\langle y, x\rangle_{\Spc X' \times \Spc X}\le \sum_{n=1}^N \|y_n\|_{\Spc X'_n}\|x_n\|_{\Spc X_n}\le 
  \|\V y\|_{\Spc Z'} \|\V x\|_{\Spc Z}=
\|\V y\|_{\Spc Z'}\|x\|_{\Spc X},
\end{align}
which shows that $\|y\|_{\Spc X'}$ is upper-bounded by $\|\V y\|_{\Spc Z'}=\left\| (\|y_1\|_{\Spc X'_1},\ldots,\|y_N\|_{\Spc X'_N})\right\|_{\Spc Z'}$.
To prove that we actually have $\|y\|_{\Spc X'} =\|\V y\|_{\Spc Z'}$, for any $\epsilon>0$,   we need to find $x_{\epsilon}\in \Spc X$ with $\|x_{\epsilon}\|_{\Spc X}=1$ such that  
$$
  \langle y, x_{\epsilon}\rangle_{\Spc X' \times \Spc X}\geq \|\V y\|_{\Spc Z'}-\epsilon.
$$
By definition of the dual norm $\|\cdot\|_{\Spc Z'}$, we have that
\begin{equation}\label{Eq:DualZNorm}
\|\V y\|_{\Spc Z'}= \sup_{\substack{\V \alpha \in \mathbb{R}^N \\ \revise{\|\V \alpha\|_{\Spc Z}\le1}}} \V y^T \V\alpha.
\end{equation}
Since $\mathbb{R}^N$ is a finite-dimensional vector-space, the unit ball $B_{\Spc Z} = \{\alpha \in \mathbb{R}^N: \|\V \alpha\|_{\Spc Z}\le 1\}$ is compact. Hence, there exists a vector $\V \alpha =(\alpha_1,\ldots,\alpha_N)\in B_{\Spc Z}$ that attains the supremum in \eqref{Eq:DualZNorm}. In other words, 
$$
\|\V y\|_{\Spc Z'}= \V y^T \V \alpha=\sum_{n=1}^N \|y_n\|_{\Spc X'_n} \alpha_n.
$$
Similarly, for any $\epsilon>0$, the definition of the dual norm implies the existence of unit-norm elements $x_n \in \Spc X_n$ for $n=1,\ldots,N$ such that 
\begin{equation}\label{Eq:EpsilonApprox}
 \langle y_n, x_n\rangle_{\Spc X'_n \times \Spc X_n}\geq \|y_n\|_{\Spc X'_n}-\frac{2\epsilon}{N(\alpha_n^2+1)}.
\end{equation}
We then set $x_{\epsilon} =(\alpha_1 x_1, \ldots, \alpha_N x_N)\in\Spc X$ and observe that
$$
\|x_{\epsilon}\|_{\Spc X} = \left\|(\|\alpha_1 x_1\|_{\Spc X_1},\ldots,\|\alpha_N x_N\|_{\Spc X_N}) \right\|_{\Spc Z}= \|(|\alpha_1|,\ldots,|\alpha_N|)\|_{\Spc Z} = 1.
$$
Based on \eqref{Eq:EpsilonApprox} and the inequality $\frac{\alpha}{\alpha^2+1} \leq \frac{1}{2}$ for all $\alpha \in \R$, we then deduce that 
\begin{align*}
 \langle y, x_{\epsilon}\rangle_{\Spc X' \times \Spc X} & = \sum_{n=1}^N \langle y_n, \alpha_n x_n\rangle_{\Spc X'_n \times \Spc X_n} \geq \sum_{n=1}^N \alpha_n\left(\|y_n\|_{\Spc X'_n}-\frac{2\epsilon}{N(\alpha_n^2+1)}\right)\\&= \sum_{n=1}^N \alpha_n\|y_n\|_{\Spc X'_n}  - \frac{2\epsilon}{N} \sum_{n=1}^N\frac{\alpha_n}{\alpha_n^2+1} = \|\V y\|_{\Spc Z'}  - \frac{2\epsilon}{N} \sum_{N=1}^N\frac{\alpha_n}{\alpha_n^2+1} \geq \|\V y\|_{\Spc Z'} - \epsilon,
\end{align*}
which, in light of the inequality $\|y\|_{\Spc X'} \leq \|\V y\|_{\Spc Z'}$, allows us to conclude that $\|y\|_{\Spc X'} = \|\V y\|_{\Spc Z'}$.

To prove the second property, we observe that $y\in \Spc X'$ and $x \in \Spc X$ form a conjugate pair if and only if an equality occurs in both \eqref{Eq:innerdualitybound} and \eqref{Eq:outerdualitybound}. Inequalities \eqref{Eq:innerdualitybound0} and \eqref{Eq:innerdualitybound} are saturated if and only if $y_n=\alpha_n x^\ast_n$, $\alpha_n\in\R^+$, and $(x_n^\ast,x_n)$ form a $(\Spc X'_n,\Spc X_n)$-conjugate pair. 
The saturation of \eqref{Eq:outerdualitybound} with $\|y\|_{\Spc X'}=\|\V y\|_{\Spc Z'}=\|\V z\|_{\Spc Z}=\|x\|_{\Spc X}$ is then equivalent to 
$\V y=\V z^\ast=(z^\ast_1,\dots,z^\ast_N)$. Under the assumption that $x_n\ne0$, this yields $\alpha_n=\frac{z^\ast_n}{\|x_n^\ast\|_{\Spc X'_n}}$, which is the announced result since 
$\|x_n^\ast\|_{\Spc X'_n}=\|x_n\|_{\Spc X_n}$. 

The third property is due to Dowling and Saejung  \cite{Dowling2008} under the assumption that the $\|\cdot\|_{\Spc Z}$-norm is absolute and monotone in the positive orthant; in other words, when the condition $0\leq a_n \leq b_n$ for $n=1,\ldots,N$ implies that $\|\V a\|_{Z} \leq \|\V b\|_{Z}$. 
By invoking Bauer's theorem \cite{Bauer1961}, we are able to drop the (redundant) assumption of monotonicity since it is implied by the absoluteness property.

The last statement is a direct consequence of the isometric isomorphism between
$\Spc X_{\rm sum}=(\Spc X_1 \oplus \dots, \oplus \Spc X_N)_{\Spc Z}$ and
$\Spc X=(\Spc X_1 \times \dots, \times \Spc X_N)_{\Spc Z}$.
\end{proof}

In particular, if $\|\cdot\|_{\Spc Z}=\|\cdot\|_2$ is the usual Euclidean norm, then
$\V z^\ast$ in Property 2 is unique and coincides with $\V z$, which implies that
the Banach conjugate of $\V x=(x_1,\dots,x_N) \in \Spc X$ is simply 
$\V x^\ast=(x^\ast_1, \dots, x^\ast_N)\in\Spc X'$.

The combination of these preparatory results and Theorem \ref{Theo:GeneralRepBanach} allows us to deduce the following.

\begin{theorem}[Representer theorem for direct-product spaces]
\label{Theo:Directprod}
If the space $\Spc X'$ in Theorem 1 has a direct-product decomposition as
$\Spc X'=(\Spc X'_1\times\dots \times \Spc X'_N)_{\Spc Z'}$ with predual
 $\Spc X=(\Spc X_1\times\dots \times \Spc X_N)_{\Spc Z}$, where 
$(\Spc X'_1,\Spc X_1),\dots,(\Spc X'_N,\Spc X_N)$ are dual pairs of Banach spaces and both $E$ and $\psi$ are strictly convex, then
the solutions $f_0=(f_{0,1},\dots,f_{0,N})\in S\subset \Spc X'$ of the optimization problem \eqref{Eq:GenericOptimizationProb}
are $(\Spc X',\Spc X$)-Banach conjugates of a common $${\nu}_0=({\nu}_{0,1},\dots,{\nu}_{0,N})=\sum_{m=1}^M a_m \nu_m,$$
where
$\nu_m=(\nu_{m,1},\dots,\nu_{m,N})\in \Spc X$ 
with $\nu_{m,n} \in \Spc X_n$
and a suitable set of coefficients $\V a\inR^M$. 



Moreover, 
%
%
%
depending of the properties of the underlying Banach constituents, the
solution components $f_{0,n} \in \Spc X'_n$ have the following characterization 
with predefined scaling constants 
\begin{align}
\label{Eq:alphas}
\alpha_n=\left\{
\begin{array}{ll}
 \frac{y_n}{y^\ast_n}>0, & y_n\ne 0    \\
 0, &   \mbox{otherwise},   \\
\end{array}\right.
\end{align}
where $\V y=(\|f_{0,1}\|_{\Spc X_1'},\dots,\|f_{0,N}\|_{\Spc X_N'})$ and 
$\V y^*=\Op J_{\Spc Z'}\{\V y\}$:
\begin{itemize}
\item If $\Spc X_n'$ is a Hilbert space and 
$\Spc Z'$ is strictly convex, then
$f_{0,n}$ is unique and admits the linear representation
\begin{align}
\label{Eq:f0MultiHilbert}
f_{0,n}=\alpha_n\sum_{m=1}^{M} a_m \varphi_{m,n}
\end{align}
with $\varphi_{m,n}=\Op J_{\Spc X_n}\{\nu_{m,n}\}\in \Spc X_n'$, where
$\Op J_{\Spc X_n}$ is the Riesz map $\Spc X_n \to \Spc X_n'$.
\item If $\Spc X_n'$ is a 
strictly convex Banach space and 
$\Spc Z'$ is strictly convex, then
the solution component is unique and admits the parametric representation
\begin{align}
\label{Eq:f0MultiBanach}
f_{0,n}=\alpha_n\Op J_{\Spc X_n}\left\{\sum_{m=1}^{M} a_m \nu_{m,n}\right\}
\end{align}
where
$\Op J_{\Spc X_n}$ is the (nonlinear) duality operator $\Spc X_n \to \Spc X_n'$.
\item If $\Spc X_n'$ is a non-strictly convex Banach space, then the subcomponent solution set 
$S|_{\Spc X_n'}$ is the weak*-closure of the convex hull of its extremal points, which can all be expressed as
\begin{align}
\label{Eq:f0MultiSparse}
f_{0,n}= \sum_{k=1}^{K_{0}} c_{k,n} e_{k,n},
\end{align}
where 
$e_{1,n},\dots,e_{K_0,n}\in \Spc X_n'$ are some extremal points of the unit ball in ${\Spc X_n'}$
and $c_{1,n},\dots,c_{K_0,n} \inR$ some appropriate weights;
the (minimal) number of atoms
$K_{0}\le M$ is
common to all the components associated with non-reflexive Banach spaces.
\end{itemize}

In the particular case where $\|\cdot\|_{\Spc Z'}=\|\cdot\|_{1}$, \eqref{Eq:f0MultiSparse} can be replaced by
\begin{align}
\label{Eq:MultiSparse}
f_{0,n}=\sum_{k=1}^{K_n} c_{k,n} e_{k,n}
\end{align}
with $\sum_{n=1}^N K_n\le M$.
In addition, \eqref{Eq:f0MultiBanach} (resp. \eqref{Eq:f0MultiHilbert}) remains valid for the components for which the space $\Spc X_n'$ is 
strictly convex (resp., Hilbertian), with the caveat
that the solution is no longer guaranteed to be unique; this, then, contributes a degenerate version of
\eqref{Eq:MultiSparse} with $K_{n}=1$, $c_{1,n}=\|f_{0,n}\|_{\Spc X'_n}$, and 
$e_{1,n}=f_{0,n}/\|f_{0,n}\|_{\Spc X'_n}$.
\end{theorem}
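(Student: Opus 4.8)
The plan is to deduce the statement by feeding the product structure of Lemma \ref{Theo:DirectSumOpt} into the abstract representer Theorem \ref{Theo:GeneralRepBanach}. First I would verify that the hypotheses of Theorem \ref{Theo:GeneralRepBanach} hold for the global space $\Spc X'=(\Spc X'_1\times\dots\times\Spc X'_N)_{\Spc Z'}$: by Item 1 of Lemma \ref{Theo:DirectSumOpt} this is the continuous dual of $\Spc X=(\Spc X_1\times\dots\times\Spc X_N)_{\Spc Z}$, so $(\Spc X,\Spc X')$ is a genuine dual pair, while $E$ and $\psi$ are strictly convex by assumption. Theorem \ref{Theo:GeneralRepBanach} then produces a common $\nu_0\in\Spc N_{\V\nu}$, i.e. $\nu_0=\sum_{m=1}^M a_m\nu_m$ for some $\V a\in\R^M$, such that every solution $f_0\in S$ lies in $J(\nu_0)$; read componentwise this says $\nu_0=(\nu_{0,1},\dots,\nu_{0,N})$ with $\nu_{0,n}=\sum_m a_m\nu_{m,n}$, which is the first assertion.

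The second step is to turn the global conjugacy $f_0\in J(\nu_0)$ into $N$ component relations via Item 2 of Lemma \ref{Theo:DirectSumOpt}. Applied to the pair $(f_0,\nu_0)$ it gives $f_{0,n}=\alpha_n\nu_{0,n}^\ast$, where $\nu_{0,n}^\ast\in\Spc X'_n$ is an $(\Spc X'_n,\Spc X_n)$-conjugate of $\nu_{0,n}$ and $\alpha_n$ is the scaling of \eqref{Eq:DirectSumConj} built from an outer conjugate $\V z^\ast$ of $\V z=(\|\nu_{0,1}\|_{\Spc X_1},\dots,\|\nu_{0,N}\|_{\Spc X_N})$. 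Here I would reconcile \eqref{Eq:DirectSumConj} with the form \eqref{Eq:alphas} stated in the theorem: norm preservation $\|\nu_{0,n}^\ast\|_{\Spc X'_n}=\|\nu_{0,n}\|_{\Spc X_n}$ gives $\|f_{0,n}\|_{\Spc X'_n}=\alpha_n\|\nu_{0,n}\|_{\Spc X_n}$, hence $\V y=\V z^\ast$ with $\V y=(\|f_{0,1}\|_{\Spc X'_1},\dots,\|f_{0,N}\|_{\Spc X'_N})$, and, in the regimes where $\Spc Z'$ is strictly convex, the finite-dimensional reflexivity of $\Spc Z$ together with single-valuedness of $\Op J_{\Spc Z'}$ yields $\V y^\ast=\Op J_{\Spc Z'}\{\V y\}=\V z$; substituting $y_n=z^\ast_n$ and $y^\ast_n=\|\nu_{0,n}\|_{\Spc X_n}$ into \eqref{Eq:alphas} recovers \eqref{Eq:DirectSumConj}.

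Third, I would specialize the conjugate $\nu_{0,n}^\ast$ according to the nature of $\Spc X'_n$. When $\Spc X'_n$ is Hilbertian the duality map $\Op J_{\Spc X_n}$ is the linear Riesz map, so $\nu_{0,n}^\ast=\Op J_{\Spc X_n}\{\sum_m a_m\nu_{m,n}\}=\sum_m a_m\varphi_{m,n}$ and \eqref{Eq:f0MultiHilbert} follows; when $\Spc X'_n$ is strictly convex the duality operator is single-valued but nonlinear, giving \eqref{Eq:f0MultiBanach} directly. For a non-strictly-convex $\Spc X'_n$ the product $\Spc X'$ is itself non-strictly convex, so the third bullet of Theorem \ref{Theo:GeneralRepBanach} applies: each extremal solution is $f_0=\sum_{k=1}^{K_0}c_k e_k$ with $K_0\le M$ and $e_k\in{\rm Ext}(B_{\Spc X'})$. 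I would then invoke Item 3 of Lemma \ref{Theo:DirectSumOpt}, read on $\Spc X'$, to describe $e_k=(e_{k,1},\dots,e_{k,N})$, project onto the $n$-th slot, and absorb the normalizers $\|e_{k,n}\|_{\Spc X'_n}$ into the weights to obtain \eqref{Eq:f0MultiSparse} with $c_{k,n}=c_k\|e_{k,n}\|_{\Spc X'_n}$ and a shared atom count $K_0\le M$ across the non-reflexive slots.

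Finally I would treat the two refinements. For $\|\cdot\|_{\Spc Z'}=\|\cdot\|_1$, the extremal points of the unit ball of $(\R^N,\|\cdot\|_1)$ are the signed canonical vectors, so Item 3 of Lemma \ref{Theo:DirectSumOpt} forces each global atom $e_k$ to be supported on a single component; distributing the $K_0\le M$ atoms among the slots yields \eqref{Eq:MultiSparse} with $\sum_n K_n\le M$. The degenerate caveat for a strictly convex or Hilbertian component in a non-unique problem is immediate: the single point $f_{0,n}$ is written as $c_{1,n}e_{1,n}$ with $e_{1,n}=f_{0,n}/\|f_{0,n}\|_{\Spc X'_n}$, which is extremal because every boundary point of a strictly convex ball is extremal. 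I expect the main obstacle to be precisely the non-strictly-convex case: transferring the global extremal-point description of Theorem \ref{Theo:GeneralRepBanach} to per-component atoms through Item 3 of Lemma \ref{Theo:DirectSumOpt}, keeping the count $K_0$ common across slots, and establishing the $\ell_1$ sparsification; by contrast the Hilbert and strictly convex cases reduce to routine substitution once the conjugacy has been split.
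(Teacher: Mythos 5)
Your proposal is correct and follows essentially the same route as the paper's own proof: invoke Theorem \ref{Theo:GeneralRepBanach} on the product space to get the common $\nu_0$, split the conjugacy componentwise via Property 2 of Lemma \ref{Theo:DirectSumOpt}, and treat the three regimes (including the Krein--Milman/extremal-point factorization through Property 3 and the $\ell_1$ specialization) exactly as the paper does. The only local difference is that you obtain the common value of $\V y=(\|f_{0,n}\|_{\Spc X'_n})_{n}$ from the single-valuedness of the outer duality map applied to the common $\V z=(\|\nu_{0,n}\|_{\Spc X_n})_{n}$, whereas the paper derives it by a convexity-of-$S$ contradiction argument using the strict convexity of $\|\cdot\|_{\Spc Z'}$; both arguments are valid.
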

\begin{proof}
The existence of solutions $f_0 \in \Spc X'$ and the property that $S \subseteq J_\Spc X(\nu_0)$ for some $\nu_0=\sum_{m=1}^M a_m \nu_m \in \Spc N_\V \nu$ is ensured by Theorem 1.
We then proceed in three steps.\\[-2ex]

\item (i) Constant value of $\psi(\|f_0\|_{\Spc X'})$ for all $f_0\in S$.\\
The key here is the {strict convexity} of $f \mapsto E(\V y,\V \nu(f))$ 
together with the convexity of $f\mapsto \psi(\|f\|_{\Spc X'})$. By applying a standard argument  (by contradiction) that uses the convexity of $S$, we 
show that there exist two constants $C_1$ and $C_2$ such that
$E(\V y,\V \nu(f_0))=C_1$ and $\psi(\|f_0\|_{\Spc X'})=C_2$ for all
$f_0 \in S$ (see, for instance, the last part of the proof in \cite[Appendix B]{Gupta2018}). By invoking the strict convexity of $E$, this then implies that all solutions share the same measurement vector
$\V z_0=\V \nu(f_0)$. Likewise, when $\psi$ is strictly convex, we readily deduce that $\|f_0\|_{\Spc X'}$ takes a constant value.\\[-2ex]


\item (ii) \revise{Uniqueness} of $\|f_{0,n}\|_{\Spc X_n'}$ in the strictly-convex case.\\
To show that $\|f_{0,n}\|_{\Spc X_n'}=y_n$ holds for all $f_0\in S$, we suppose that
there exists another solution $\widetilde f_0\in S$ such that $\|\widetilde f_0\|_{\Spc X'}=\|f_0\|_{\Spc X'}$
and $\|\widetilde f_{0,n}\|_{\Spc X_n'}=\widetilde y_n$ with $\widetilde{\V y}\ne \V y$.
Since $S$ is convex, $\lambda \widetilde f_0 + (1-\lambda) f_0$ with any $\lambda\in(0,1)$ must also be a solution
with associated norm $\|\lambda \widetilde f_0+ (1-\lambda) f_0\|_{\Spc X'}\le  \|\lambda \widetilde{\V y}+ (1-\lambda) \V y\|_{\Spc Z'}$, by the triangle inequality.
However, the norm equality $\| \widetilde f_0\|_{\Spc X'}=\|\widetilde{\V y}\|_{\Spc Z'}= \|\V y\|_{\Spc Z'}$ and the strict-convexity of $\|\cdot\|_{\Spc Z'}$ (see Definition \ref{Def:StrictConvex}) implies that
 $ \|\lambda \widetilde{\V y} + (1-\lambda) \V y\|_{\Spc Z'}< \|\V y\|_{\Spc Z'}=\| f_0\|_{\Spc X'}$, which results in a contradiction.\\[-2ex]

\item (iii) Generic form of the solution component $f_{0,n}$.\\
We assume that
$y_n=\|f_{0,n}\|_{\Spc X'}\ne0$; otherwise, we simply have that $f_{0,n}=0$.
From Property 2 in Lemma \ref{Theo:DirectSumOpt}, we know that $f_0=(f_{0,1},\dots,f_{0,N})$
and $\nu_0=(\nu_{0,1},\dots,\nu_{0,N})$ form a conjugate pair if and only if there exists
$f^\ast_{0,n}\in J_{\Spc X_n'}(f_{0,n})$ such that
$\nu_{0,n}=(y_n^\ast/y_n) f^\ast_{0,n}$, where $\V y^\ast=(y_1^\ast,\dots,y^\ast_N)=\Op J_{\Spc X'}\{\V y\}$. 

When $\Spc X_n'$ is strictly convex,
the duality mapping is
single-valued. 
The representations in \eqref{Eq:f0MultiHilbert} and \eqref{Eq:f0MultiSparse} then directly follow from the primary expansion $\nu_{0,n}=\sum_{m=1}^M a_m \nu_{m,n}$ and the homogeneity property of the duality mapping expressed as $\Op J_{\Spc X}\{\alpha \nu\}=\alpha \Op J_{\Spc X}\{\nu\}$ for any $\nu \in \Spc X$ and $\alpha\inR^+$ (see \cite{Cioranescu2012}).

Since $S$ is convex and weak$^\ast$-compact, we can invoke the Krein-Milman theorem,
which states that $S$ is the 
closure of the convex hull of its extremal points.
The same holds true for the convex set $S|_{\Spc X'_n}$ (the restriction of $S$ on $\Spc X_n'$)
with  ${\rm Ext}(S|_{\Spc X'_n})  \subseteq {\rm Ext}(S)|_{\Spc X'_n}$.
By recalling that all points $f_{0}\in {\rm Ext}(S)$ can be represented as
$f_{0}=(f_{0,1},\dots,f_{0,N})=\sum_{k=1}^{K_0} c_k e_k$,  where $e_k=(e_{k,1},\dots,e_{k,N})\in{\rm Ext}(B_{\Spc X'})$ and $K_0\le M$ (by Theorem \ref{Theo:GeneralRepBanach}), we obtain that
\begin{align}
\label{Eq:f0n}
f_{0,n}=\sum_{k=1}^{K_0}  c_k \|e_{k,n}\|_{\Spc X'_n}\widetilde e_{k,n},
\end{align}
where  $\widetilde e_{k,n}=e_{k,n}/\|e_{k,n}\|_{\Spc X'_n}$ are extremal points of the unit ball
in $\Spc X_n'$ (by Lemma \ref{Theo:DirectSumOpt}, Property 3). %
The announced statement with $c_{k,n}=\|e_{k,n}\|_{\Spc X'_n} c_k$
then follows from the property that \eqref{Eq:f0n}
 is valid for all
$f_{0,n} \in {\rm Ext}(S)|_{\Spc X'_n}\supseteq {\rm Ext}(S|_{\Spc X'_n})$.
In fact, Property 3 in Lemma \ref{Theo:DirectSumOpt} tells us that the subset of points $f_{0,n} \in {\rm Ext}(S|_{\Spc X'_n})$
are those for which $\V e_k=\V y/\|\V y\|_{\Spc Z'}$ are extremal points of the unit ball in $\Spc Z'$.
In particular, when $\|\cdot\|_{\Spc Z'}=\|\cdot\|_{1}$ (outer $\ell_1$-norm), the $\V e_k$ all have the binary form $(0,0,\dots,\pm 1,0,\dots)$ with a single active coefficient at $n=n_k$, which then yields \eqref{Eq:MultiSparse}.
 \end{proof}
The outcome of Theorem \ref{Theo:Directprod} is that the generic form of the solution in Theorem \ref{Theo:GeneralRepBanach} is essentially transferred to the direct-product components, with the distribution of the relative
energy being controlled by 
the outer norm $\|\cdot\|_{\Spc Z'}$. The effect of the $\ell_1$-norm is significant in that respect because it acts as a threshold that selectively blocks certain solution components and lets others through.

We wish to highlight the fact that the arguments for the proof of Theorem \ref{Theo:Directprod} (as well as Theorem 1)
\revise{involves} neither a calculus of variations nor a recourse to the sophisticated machinery of Fr\'echet derivatives and subgradients.
It only requires the Hahn-Banach theorem and the characterization of the configurations that
saturate the underlying duality 
inequalities.
\section{Convex Optimization in Sums of Banach Spaces}
\label{Sec:SumsBanach}
The techniques that we 
describe next are relevant to inverse problems for 
which the solution $f_0$ can be decomposed into a sum of components that have distinct smoothness and/or sparsity properties. The solution then lives in a sum of Banach spaces. Beside the reconstruction of $f_0$ from the noisy measurement $\V y=\V \nu(f)+\V \epsilon$, we are now faced with the additional challenge of disambiguating the individual components of the solution.

Let $\Spc X'_1, \dots,\Spc X'_N$ be a series of Banach spaces whose elements are indexed over the same domain. We then define the sum space
$$
\Spc X'_1 + \dots + \Spc X'_N=\{f=f_1+f_2+\dots+f_N: f_n \in \Spc X_n', n=1,\dots,N\}.
$$
Given a linear measurement operator
$\revise{\V \nu=(\nu_1,\dots,\nu_M)}: \Spc X'_1 + \dots + \Spc X'_N \to \R^M$ with
$\nu_n \in \cap_{n=1}^N \Spc X_n$ and a set of measurements $\V y \inR^M$, we are then interested in the study of the solvability of the
convex optimization problem
\begin{align}
\label{Eq:MultiOptimizationProb}
S=\arg \min_{(f_n)_{n=1}^N: f_n\in \Spc X'_n}  
\left(E\Big(\V y, \V \nu(\sum_{n=1}^N f_n)\Big)+ \psi\left(\left\| (\|f_1\|_{\Spc X'_1},\dots,\|f_N\|_{\Spc X'_N})\right\|_{\Spc Z'} \right)\right)
\end{align}
where the functions $E$ and $\psi$ are the same as in Theorem \ref{Theo:GeneralRepBanach}, while $\|\cdot\|_{\Spc Z'}$ is a suitable norm that controls the coupling of the components.
The idea here is to segregate the components $f_n$ by
favouring some ``regularized'' solutions $f_{0}=(f_{0,n})_{n=1}^N$ such that the $\|f_{0,n}\|_{\Spc X'_n}$ are small in an appropriate sense.
Problem \eqref{Eq:MultiOptimizationProb} is generally well defined. 
Its solution can be obtained as a special case of Theorem \ref{Theo:Directprod}.
To see this, it suffices to invoke the linearity of $\nu_m$, which yields
$$
\nu_m\Big(\sum_{n=1}^N f_n\Big)=\sum_{n=1}^N \langle \nu_m, f_n\rangle_{\Spc X_n \times \Spc X'_n}=\langle \widetilde{\nu}_m, f\rangle_{\Spc X \times \Spc X'}
$$
with $f=(f_1,\dots,f_N) \in \Spc X'=(\Spc X'_1 \times \dots \times \Spc X'_N)_{\Spc Z'}$, and $\widetilde{\nu}_m=(\nu_m,\dots,\nu_m) \in \Spc X=(\Spc X_1 \times \dots \times \Spc X_N)_{\Spc Z}$.
The multicomponent optimization problem \eqref{Eq:MultiOptimizationProb} is therefore equivalent to \eqref{Eq:GenericOptimizationProb}
with $\Spc X'$ being a direct-product space and the specific choice of a ``replicated'' measurement operator $\widetilde{ \V \nu}=(\V \nu,\dots, \V \nu)$.
Consequently, we get the general form of the solution
by simple substitution of $\nu_{m,n}$ by $\nu_m$ in Theorem \ref{Theo:Directprod}.
We shall now illustrate the power of the approach by considering special cases that are motivated by applications.

\subsection{Multicomponent Learning in RKHS/RKBS}
\label{Sec:multiRKHS}
In the classical supervised learning (or regression) setting \cite{Bishop2006}, one is given a series of data points
$(\V x_m,y_m)\inR^{d}\times \R$, $m=1,\dots,M$. The goal
is to determine a function $f: \R^d \to \R$ such that
$f(\V x_m)\approx y_m$ for all $m$ without overfitting the data, which is the reason why one generally imposes some regularization on the solution.

We make the link with our framework by considering the ``sampling'' functionals
$\V \nu =(\delta(\cdot-\V x_1),\dots,\delta(\cdot-\V x_M))$ with $\delta(\cdot-\V x_m): f \mapsto f(\V x_m)$ (Dirac impulse shifted by $\V x_m$), which is such that
$\V \nu(f)=(f(\V x_1),\dots,f(\V x_M))$.
To enable the sequential handling of data, one
quantifies the goodness of fit with some additive loss functional of the form
$\sum^M_{m=1} E\big(y_m, f(\V x_m)\big)$ with  $E: \R \times \R \to \R^{+}\cup \{+\infty\}$,
the simplest case being the least-squares criterion with $E(y_m,f(\V x_m))=|y_m-f(\V x_m)|^2$.
The traditional form of regularization is the squared Hilbertian norm $\psi(\|f\|_{\Spc X'})=\|f\|^2_{\Spc H}$ with $\psi(\cdot)=|\cdot|^2$ (strictly convex) in the
reproducing-kernel Hilbert space $\Spc X'=\Spc H$.
\begin{definition} [See \cite{Aronszajn1950}]
A Hilbert space $\Spc H$ of functions on $\R^d$ is called a reproducing kernel Hilbert space (RKHS) if its dual
$\Spc H'$ is such that
$\delta(\cdot-\V x) \in \Spc H'$ for any $\V x \inR^d$.
Then, the unique representer $r_\Spc H(\cdot,\V x)=\Op J_{\Spc H'}\{\delta(\cdot-\V x)\}=\delta^\ast(\cdot-\V x) \in \Spc H$ when  indexed by $\V x$ is called the reproducing kernel of the Hilbert space.
\end{definition}
The ``reproducing'' qualifier refers to the basic property that
$$(r_\Spc H(\cdot,\V x),f)_{\Spc H}=(\delta^\ast(\cdot-\V x),f)_\Spc H=\langle \delta(\cdot-\V x),f\rangle_{\Spc H' \times \Spc H}=f(\V x)$$
for all $f\in \Spc H$ and any $\V x\inR^d$.
We now use Theorem  \ref{Theo:Directprod} to obtain a multi-kernel extension of Sch\"olkopf's celebrated
representer theorem for learning in RKHS \cite{Scholkopf2001}.
The classical theorem corresponds to the scenario of a single RKHS with $N=1$, for which the underlying kernel is simply $r(\cdot,\V y)=r_{\Spc H}(\cdot,\V y)=\delta^\ast(\cdot-\V y)$.

\begin{corollary}[Multi-kernel expansion in RKHS]
\label{Theo:MultiKernel}
Let us consider the following setting:
\begin{itemize}
\item A series of reproducing-kernel Hilbert spaces $\Spc H_1, \dots, \Spc H_N$ whose members are functions on $\R^d$. The reproducing kernel of $\Spc H_n$ is  
$r_{\Spc H_n}: \R^d \times \R^d \to \R$.
\item A strictly convex loss functional $E: \R \times \R \to \R^{+}.$ 
\item A strictly increasing and convex function $\psi: \R^+ \to \R^+$.
\item An absolute norm $\|\cdot\|_{\Spc Z}$  on $\R^N$.

\end{itemize}
Then,  for any given series of points $(\V x_m,y_m)\inR^{d+1}$, $m=1,\dots,M$, the multi-component data-fitting problem
\begin{align}
\label{Eq:MultiOptimizationProb2}
S=\arg \min_{f =f_1 + \cdots+f_N: f_n\in \Spc H_n}  \left(\sum_{m=1}^M
E\big(y_m, f(\V x_m)\big)+ \psi\big(\big\|(\|f_1\|_{\Spc H_1},\dots,\|f_N\|_{\Spc H_N})\big\|_{\Spc Z'}\big)\right)
\end{align}
always admits a global solution of the form
\begin{align}
\label{Eq:SolmultiRKHS}
f_0(\V x)=\sum_{m=1}^M a_m r(\V x,\V x_m)
\end{align}
with $(a_1,\dots,a_M) \inR^M$, where the underlying multi-kernel is given by
\begin{align}
\label{Eq:SolmultiRKHS2}
r(\V x,\V y)=\sum_{n=1}^{N} \alpha_n r_{\Spc H_n}(\V x,\V y),
\end{align}
with suitable weights $(\alpha_1, \cdots,\alpha_N) \in \R_{\ge 0}^N$. Moreover, the solution is unique if $\psi(\cdot)$ and $\|\cdot\|_{\Spc Z'}$ are both strictly convex.
\end{corollary}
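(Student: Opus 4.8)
The plan is to recognize the multi-component problem \eqref{Eq:MultiOptimizationProb2} as a particular instance of the direct-product optimization governed by Theorem~\ref{Theo:Directprod}, and then to specialize its Hilbertian branch \eqref{Eq:f0MultiHilbert} to the reproducing-kernel setting. Concretely, I would set $\Spc X'_n=\Spc H_n$ with predual $\Spc X_n=\Spc H_n'$, so that the ambient search space becomes the direct product $\Spc X'=(\Spc H_1\times\dots\times\Spc H_N)_{\Spc Z'}$ with predual $\Spc X=(\Spc H_1'\times\dots\times\Spc H_N')_{\Spc Z}$. The RKHS hypothesis guarantees that each sampling functional $\nu_m=\delta(\cdot-\V x_m)$ lies in $\Spc H_n'$ for every $n$, hence in $\cap_{n=1}^N\Spc H_n'$, which is exactly the membership required to form the sum space and to apply the replication reduction of the paragraph preceding Section~\ref{Sec:multiRKHS}. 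Writing $f=f_1+\dots+f_N$ and exploiting the linearity of $\nu_m$, the problem turns into the generic form \eqref{Eq:GenericOptimizationProb} over $\Spc X'$ with the replicated operator $\widetilde{\V\nu}=(\V\nu,\dots,\V\nu)$, i.e. with $\nu_{m,n}=\nu_m$ independent of $n$.

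Before invoking Theorem~\ref{Theo:Directprod}, I would verify its data-term hypothesis: the additive loss $\V z\mapsto\sum_{m=1}^M E(y_m,z_m)$ is strictly convex on $\R^M$ as a coordinate-wise sum of strictly convex scalar functions. Theorem~\ref{Theo:GeneralRepBanach} then supplies a solution $f_0=(f_{0,1},\dots,f_{0,N})$ that is an $(\Spc X',\Spc X)$-conjugate of a common $\nu_0=\sum_{m=1}^M a_m\widetilde{\nu}_m$, whose $n$-th component equals $\sum_{m=1}^M a_m\,\delta(\cdot-\V x_m)$ for every $n$. Since each $\Spc H_n$ is a Hilbert space, the Hilbertian branch \eqref{Eq:f0MultiHilbert} gives $f_{0,n}=\alpha_n\sum_{m=1}^M a_m\varphi_{m,n}$ with weights $\alpha_n\ge0$ from \eqref{Eq:alphas} and $\varphi_{m,n}=\Op J_{\Spc H_n'}\{\delta(\cdot-\V x_m)\}$. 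By the very definition of the reproducing kernel, $\Op J_{\Spc H_n'}\{\delta(\cdot-\V x_m)\}=r_{\Spc H_n}(\cdot,\V x_m)$, so $f_{0,n}=\alpha_n\sum_{m=1}^M a_m r_{\Spc H_n}(\cdot,\V x_m)$. Summing over $n$ and interchanging the two finite sums then yields $f_0(\V x)=\sum_{m=1}^M a_m\big(\sum_{n=1}^N\alpha_n r_{\Spc H_n}(\V x,\V x_m)\big)$, which is precisely \eqref{Eq:SolmultiRKHS} with the multi-kernel \eqref{Eq:SolmultiRKHS2}. The uniqueness claim when $\psi$ and $\|\cdot\|_{\Spc Z'}$ are strictly convex follows from the corresponding uniqueness clauses of Theorem~\ref{Theo:Directprod}.

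I expect the main (and essentially only) subtle point to be the reconciliation of hypotheses: Theorem~\ref{Theo:Directprod} is stated for strictly convex $\psi$, whereas the corollary assumes merely that $\psi$ is convex and strictly increasing, and it does not require $\Spc Z'$ to be strictly convex. The remedy is that the sole structural ingredient I need---the conjugacy of every solution to a common $\nu_0$---is delivered by Theorem~\ref{Theo:GeneralRepBanach} under strict convexity of the loss alone, independently of $\psi$; and since I only claim existence of one solution of the stated form, I may fix a single $f_0\in S$, read off its component norms $\V y$ and the induced $\alpha_n$, and apply Property~2 of Lemma~\ref{Theo:DirectSumOpt} to obtain the per-component representation. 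The absence of strict convexity of $\Spc Z'$ is likewise harmless: it only costs uniqueness, and the closing remark of Theorem~\ref{Theo:Directprod} explicitly preserves the linear form \eqref{Eq:f0MultiHilbert} for the Hilbertian components in that case. Once these points are dispatched, the remainder is a direct specialization.
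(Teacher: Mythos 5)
Your proposal is correct and follows essentially the same route as the paper's own proof: reduce \eqref{Eq:MultiOptimizationProb2} to the direct-product problem with the replicated measurement operator $\nu_{m,n}=\delta(\cdot-\V x_m)\in\Spc H_n'$, apply the Hilbertian branch \eqref{Eq:f0MultiHilbert} of Theorem~\ref{Theo:Directprod} with $\varphi_{m,n}=\Op J_{\Spc H_n'}\{\delta(\cdot-\V x_m)\}=r_{\Spc H_n}(\cdot,\V x_m)$, and sum the components using the shared coefficients $\V a$. Your closing discussion of how to reconcile the corollary's weaker hypotheses on $\psi$ and $\|\cdot\|_{\Spc Z'}$ with those of Theorem~\ref{Theo:Directprod} (conjugacy to a common $\nu_0$ needs only strict convexity of $E$, and the existence claim requires fixing just one $f_0\in S$) is a point the paper's proof passes over silently, and it is handled correctly.
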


\begin{proof}
Problem \eqref{Eq:MultiOptimizationProb2} is a special case of \eqref{Eq:MultiOptimizationProb} with $\Spc X'_n=\Spc H_n=\Spc H_n''$
and $\nu_m=\delta(\cdot-\V x_m) \in \Spc X_n=\Spc H'_n$, due to the RKHS property.
We then apply
\eqref{Eq:f0MultiHilbert} in Theorem \ref{Theo:Directprod}
with $\nu_{m,n}=\delta(\cdot-\V x_m)$, which gives the parametric form of the solution components with $\varphi_{m,n}=r_{\Spc H_n}(\cdot,\V x_m)=\Op J_{\Spc H'_n}\{\delta(\cdot-\V x_m)\}$
where $r_{\Spc H_n}$ is the reproducing kernel of $\Spc H_n$.
 The summation constraint $f_0=f_{0,1}+\dots+f_{0,N}$ and the property that the coefficients $\V a\in \R^M$ are shared by all components then 
gives \eqref{Eq:SolmultiRKHS}. 
\end{proof}

The interesting aspect in Corollary \ref {Theo:MultiKernel} is that the underlying kernel given by \eqref{Eq:SolmultiRKHS2}
is tunable, which offers flexibility and is in the line with certain forms of multiple-kernel learning \cite{Gonen2011}.

The simplest choice of regularization functional in \eqref{Eq:MultiOptimizationProb2} is the weighted sum
$\sum_{n=1}^N \lambda_n \|f_n\|^2_{\Spc H_n}$, where the
$\lambda_n>0$ are appropriate regularization parameters. This corresponds to  $\psi(\cdot)=|\cdot|^2$ in \eqref{Eq:MultiOptimizationProb2} with the outer weighted Euclidean norm 
$$
\|\V y\|_{\Spc Z'}=\|\V y\|_{2, \V \lambda}=\left( \lambda_1 y_1^2 + \cdots + \lambda_N y_N^2 \right)^{\frac{1}{2}},
$$
whose dual is 
$\|\cdot \|_{2, \V \mu}$ with $\V \mu=(1/\lambda_1,\dots,1/\lambda_N)$.
Correspondingly, the ($\Spc Z$-$\Spc Z')$ conjugate of $\V y\in \Spc Z'$ is
$$
\revise{\V y^\ast=(\lambda_1 y_1,\dots,\lambda_N y_N)} \in \Spc Z
$$
which, with the help of \eqref{Eq:alphas} in Theorem \ref{Theo:Directprod}, then yields $\alpha_n=y_n/y^\ast_n=1/\lambda_n$ and offers direct control over \eqref{Eq:SolmultiRKHS2}.

By selecting the outer norm $\|\cdot\|_{\Spc Z'}$ to be non-Euclidean, one can also make the kernel-shaping effect in \eqref{Eq:SolmultiRKHS2} data-dependent, with the effect becoming more pronounced as we relax the ``strictness'' of the convexity requirement. 
The prototypical case of a regularization functional that falls into the latter category is $\lambda \sum_{n=1}^N  \|f_n\|_{\Spc H_n}$ (mixed $\ell_1$-norm) with parameter $\lambda>0$,
which favors sparse kernel configurations. Micchelli and Pontil \cite{Micchelli2005b} have shown that the corresponding minimization problem is equivalent to a special instance of kernel learning
where the ``optimal'' kernel $r(\cdot,\V x)$ is selected within the multi-kernel dictionary
$$
\Spc K=\left\{\sum_{n=1}^N \alpha_n r_{\Spc H_n}(\cdot,\V x): \alpha_n\ge 0, \alpha_1+\cdots+\alpha_N=1\right\},
$$
which is consistent with the last statement in Theorem \ref{Theo:Directprod}.

The approach is generalizable to reflexive Banach spaces with the caveat that the resulting representer model is no longer linear.
\begin{definition}[See \cite{Zhang2009,Zhang2012b}]
A strictly convex and reflexive Banach space $\Spc B$ of functions on $\R^d$ is called a reproducing-kernel Banach space (RKBS) if
$\delta(\cdot-\V x) \in \Spc B'$ for any $\V x \inR^d$.
Then, the unique representer $r_\Spc B(\cdot,\V x)=\Op J_{\Spc B'}\{\delta(\cdot-\V x)\}=\delta^\ast(\cdot-\V x) \in \Spc B$ when  indexed by $\V x$ is called the reproducing kernel of the Banach space.
\end{definition}

We can then consider the direct Banach counterpart of the multicomponent problem in Corollary \ref{Theo:MultiKernel}
with $f_n \in \Spc B_n=\Spc X'_n$, $\delta(\cdot-\V x_m) \in \Spc B'_n=\Spc X''_n=\Spc X_n$ and derive the general parametric form of the solution as
\begin{align}
\label{Eq:SolmultiRKBS}
f_0=f_1+\cdots+f_N \mbox{ with } f_n=\alpha_n\Op J_{\Spc B_n'}\left\{ \sum_{m=1}^M a_m \delta(\cdot-\V x_m)\right\}
\end{align}
with $(a_m)\inR^M$ and $(\alpha_n)\inR_{\ge0}^N$.
The result provided by \eqref{Eq:SolmultiRKBS} is new to the best of our knowledge. It is the multi-component extension of  \cite[Theorem 2]{Zhang2012b}, which is itself a reformulation and slight generalization of \cite[Theorem 19]{Zhang2009}.
\subsection{Sparse Signal Representation in Dictionaries}
\label{Sec:SparseDico}
While the use of classical Hilbertian smoothness (e.g., Sobolev) norms lends itself to a closed-form resolution of \eqref{Eq:MultiOptimizationProb}, the underlying multicomponent model takes its full power when the regularization norms are not strictly convex and promote sparsity. This statement is supported by a large body of work in compressed sensing (CS).
To show how this fits the present formulation, we now consider the prototypical CS problem:
the recovery of a vector $\M x\inR^N$ from its linear measurements
$\M H \M x=\M y\inR^M$,
where
$\M H$ 
is the system matrix with $M$ much smaller than $N$.
Given a series of invertible matrices $\M L_i \in \R^{N \times N}$, we then specify
the component Banach spaces $\Spc X'_i=(\R^N,{\|\cdot\|_{\Spc X_i'}})$
with $\|\M x\|_{\Spc X_i'}=\|\M L_i\M x\|_1$. There, the use of the $\ell_1$-norm is intended to promote sparsity in the transformed domain associated with $\M L_i$.

In order to be able to apply Theorem \ref{Theo:Directprod}, we need to identify the
predual spaces $\Spc X_i$ as well as the extremal points
of the unit ball in $\Spc X_i'$. To that end, we invoke Proposition \ref{Prop:IsometricIso}
with the primary pair of dual spaces $\Spc X=(\R^N,\|\cdot\|_{\infty})$, $\Spc X'=(\R^N,\|\cdot\|_{1})$ and with $\Op T_i: \M x \mapsto \M L^T_i\M x$.
This then results in the identification of the predual Banach space
$\Spc X_i=\Op T_i(\Spc X)=(\R^N,\|\cdot\|_{\Spc X_i})$ with 
$\|\M x\|_{\Spc X_i}=\|\M L_i^{-T}\M x\|_\infty$
and its dual  
$\Spc X'_i = (\mathbb{R}^N,\|\cdot\|_{\Spc X'_i})$ with
\begin{align}
\label{eq:Xisparse}
\|\M y \|_{\Spc X'_i} = \sup_{\|\M x\|_{\Spc X_i}=1} \M y^T \M x = \sup_{\|\M L_i^{-T}\M x\|_\infty=1} \M y^T \M x 
= \sup_{\|\M v\|_\infty=1} \M y^T \M L_i^T \M v = \| \M L_i \M y\|_1,
\end{align}
which is consistent with the definition $\|\M x\|_{\Spc X'_i}=\|\M L_i \M x\|_1=\|\M c_i\|_1$.  

It is well known that the extremal points of the unit ball in $(\R^N,\|\cdot\|_1)$
are $\pm\M e_n$ where $\M e_n$  is the $n$th element of the canonical basis with $[\M e_n]_m=\delta_{m-n}$.
We then apply the isometric isomophism $\Spc Y_i'=\Op T_i^{-1\ast}(\Spc X')$ with
$\Op T_i^{-1\ast}: \M e \mapsto \M L_i^{-1} \M e$
(see Properties 2 and 4 in Proposition \ref{Prop:IsometricIso})
to deduce that ${\rm Ext}(B_{\Spc X_i'})=\{\pm \M u_{n,i}\}_{n=1}^N$, where 
$B_{\Spc X_i'}$ is the unit ball of the Banach space $(\mathbb{R}^N,\|\cdot\|_{\Spc X_i'})$ and 
\begin{align}
\label{Eq:l1extrem}
\M u_{n,i}=\M L_i^{-1} \M e_n.
\end{align}
In fact, the latter defines a (sub)-dictionary whose elements are the row vectors of $\M L_i^{-1}$.

We now have the tools to characterize the solution(s) of the multi-component $\ell_1$ optimization problem
\begin{align}
\label{Eq:DictionaryProb}
S=\arg \min_{\M x =\M x_1 + \dots + \M x_I \inR^N}  \left(
\|\M y- \M H \M x\|_2^2+    \lambda \sum_{i=1}^I \|\M L_i \M x_i \|_1\right),
\end{align}
which is a special case of \eqref{Eq:MultiOptimizationProb} with
$\nu_m: \M x \mapsto \M h_n^T \M x$,
$\psi=\Identity$, and $\|\cdot \|_{\Spc Z'}=\|\cdot\|_1$
where $\M h_n^T$ is the $nth$ row vector of $\M H$.
First, we confirm that the problem is well posed by observing that
$\M h \in \Spc X_i$ for any measurement vector $\M h \inR^N$.
In other words, \eqref{Eq:DictionaryProb} always admits a solution, albeit not necessarily a unique one.
To obtain the 
parametric form of the extremal solution components $(\M x_{0,1},\dots,\M x_{0,M})$, we apply the last part of Theorem \ref{Theo:Directprod}, which yields
$$
\M x_{0,i}=\sum_{k=1}^{K_i} c_{k,i} \M u_{n_k, i}
$$
with $\sum_{i}^I K_i\le M$ and $\sum_{i=1}^I \|\M L_i \M x_{0,i}\|_1=\sum_{i=1}^I\|\M c_i\|_1$.
This may also be rewritten as
\begin{align}
\label{Eq:dicosolution}
\M x_0 =\sum_{i=1}^I \M x_{0,i}=\sum_{k=1}^{K_0} c_{k} \M u_{n_k, i_k}
\end{align}
with $K_0\le M$ and some appropriate weights $(c_k)\inR^{K_0}$ and
$\sum_{i=1}^I\|\M x_{i,0}\|_{\Spc X_i'}=\|\M c\|_1$.
In effect, \eqref{Eq:dicosolution} tells us that the solution is constructed by picking $K_0$ atoms (with $K_0\le M$) in an enlarged dictionary 
$$
\M U=[\M u_{1,1} \  \cdots \ \M u_{N,1} \ \M u_{1,2} 
\ \cdots \M u_{2,N} \ \cdots \cdots \ 
\M u_{I,N} ] \inR^{N \times (I \times N)},$$ 
which is formed from the unions of the $\M u_{n,i}$ in \eqref{Eq:l1extrem} with $i=1,\dots,I$, 
$n=1,\dots,N$.
The result in \eqref{Eq:dicosolution} also motivates us to reformulate Problem \eqref{Eq:DictionaryProb} in a more familiar ``synthesis'' form
\begin{align}
\label{Eq:DictionaryProb2}
S=\arg \min_{\M c \inR^{N\times I }}  \left(
\|\M y- \M H \M U \M c\|^2_2 +  \lambda \| \M c\|_1\right).
\end{align}
where $\M c\inR^{N \times I}$ is an augmented parameter vector.
The latter is the standard LASSO formulation for the recovery of a signal subject to the constraint that it has a sparse representation in some predefined dictionary $\M U$. The new elements here are the link with the ``analysis'' form \eqref{Eq:DictionaryProb} and the guarantee of the existence of a ``sparse'' solution with $K_0\le M$, irrespective of whether the conditions for uniqueness (e.g., restricted isometry) of CS are met or not.

\subsection{Signal Recovery Problems Involving Mixed Norms}
While the examples of Section \ref{Sec:multiRKHS} and \ref{Sec:SparseDico} 
are fairly classical, we can use  our high-level results
to derive some new representer theorems, such as the following example
which involves a combination of smoothness (RKHS) and sparsity-promoting regularizations.

As prerequisite, we need to specify dual pairs of Banach spaces that are matched to specific norms and regularization operators $\Lop_i$.
To that end, we assume that the linear operator $\Op T=\Lop^\ast$ is injective on some primary space $\Spc X$ 
and recall the relevant results from Proposition \ref{Prop:IsometricIso}:
\begin{enumerate}
\item The operator $\Lop^\ast$ is invertible on its range $\Op L^\ast\big(\Spc X\big)$.
\item The dual pair of Banach spaces $\Spc B=\Op L^\ast\big(\Spc X\big)$ and 
$\Spc B'=\Op L^{-1}\big(\Spc X'\big)$ is isometrically isomorphic to $(\Spc X,\Spc X')$.
\item The operator $\Lop$ isometrically maps $\Spc B'=\Op L^{-1}\big(\Spc X'\big)$ to $\Spc X'$.
\item If $\Spc X=L_2(\R^d)$, then $\Spc H=\Op L^\ast\big(L_2(\R^d)\big)$ and
$\Spc H'=\Op L^{-1}\big(L_2(\R^d)\big)$ are both Hilbert spaces, while the underlying Riesz map
is \revise{$\Op J_\Spc H=\Lop^{-1}\Op L^{-1\ast} 
: \Spc H \to \Spc H'$}.
\end{enumerate}
When we choose $\Spc X=L_2(\R^d)$, the space $\Spc X$  is its own dual and is usually associated with Tikhonov
regularization. The other fundamental scenario  is 
 $(\Spc X,\Spc X')=
\big(C_0(\R^d),$ $\Spc M(\R^d)\big)$. There, $C_0(\R^d)$
is the space of continuous functions that decay at infinity, while its
dual $\Spc M(\R^d)=\big(C_0(\R^d)\big)'$ is the space of Radon measures on $\R^d$ \cite{Rudin1991}.
The regularization norm $\|\cdot\|_\Spc M$, also known as ``total variation'' in the sense of measure theory, is often used in applications since it promotes sparsity \cite{Candes2013b,Denoyelle2017,Duval2015}.
\begin{corollary}[Representer theorem for mixed-norm regularization]
\label{Theo:L2M}
Let us consider the following setting:
\begin{itemize}
\item An operator $\Op L_1^\ast$ that is injective on $C_0(\R^d)$ and the corresponding 
dual pair of Banach spaces $\Spc B_1=\Op L_1^\ast\big(\Spc C_0(\R^d)\big)$ and $\Spc B'_1=\Op L_1^{-1}\big(\Spc M(\R^d)\big)$.
\item An operator $\Lop_2^\ast$ that is injective on $L_2(\R^d)$ and the corresponding 
dual pair of Hilbert spaces $\Spc H_2=\Op L_2^\ast\big(L_2(\R^d)\big)$ and $\Spc H'_2=\Op L_2^{-1}\big(L_2(\R^d)\big)$.
\item The linear measurement operator 
$\V \nu=(\nu_1,\dots,\nu_M): \Spc B'_1 + \Spc H'_2 \to \R^M$ with $\nu_1,\dots,\nu_M \in \Spc B_1 \cap \Spc H_2$.

\item A strictly convex loss functional $E: \R^M \times \R^M \to \R^{+}$.
\item Two adjustable weights $\lambda_1,\lambda_2 \in \R^+$.
\end{itemize}
Then, for any given $\V y\inR^M$, the two-component regularized inverse problem
\begin{align}
\label{Eq:MixedOptimizationProb}
S=\arg \min_{f =f_1 + f_2: (f_{1},f_{2})\in \Spc B'_1 \times \Spc H'_2}  \left(
E\big(\V y, \V \nu(f)\big)+  { \lambda_1\|\Lop_1f_1\|^2_\Spc M +  \lambda_2\|\Lop_2f_2\|^2_{L_2} }\right)
\end{align}
has a nonempty (and weak$^\ast$-compact) solution set $S$.
Any solution can be written as $f_0=f_{0,1}+f_{0,2}$ 
with $f_{0,1}\in S|_{\Spc B_1}$ and a unique
\begin{align}
f_{0,2}&=\sum_{m=1}^M \widetilde{a}_m \varphi_m,\label{Eq:Solmixed2}
\end{align}
with $\varphi_m=\revise{\Op L_2^{-1}\Op L_2^{-1\ast}
\{\nu_m\}}$ and $(\widetilde{a}_m)\inR^N$, that is common to all solutions.
Moreover, the extremal points of $S|_{\Spc B_1}$ can all be expressed as
\begin{align}
\label{Eq:Solmixed1}
f_{0,1}&=\sum_{k=1}^{K_1} c_k h_1(\cdot,\V \tau_k)
\end{align}
with $K_1\le M$, $(c_k)\inR^{K_1}$ and $h_1(\cdot,\V \tau_k)=\Op L_1^{-1}\{\delta(\cdot-\V \tau_k)\}$, where $\V \tau_1, \dots, \V \tau_{K_1} \inR^d$ are adaptive 
centers.
\end{corollary}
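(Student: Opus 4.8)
The plan is to recognize problem \eqref{Eq:MixedOptimizationProb} as a two-component ($N=2$) instance of the direct-sum problem \eqref{Eq:MultiOptimizationProb} and then to read the solution off Theorem \ref{Theo:Directprod}. First I would fix the constituent dual pairs $(\Spc X'_1,\Spc X_1)=(\Spc B'_1,\Spc B_1)$ and $(\Spc X'_2,\Spc X_2)=(\Spc H'_2,\Spc H_2)$, so that the ambient product space is $\Spc X'=(\Spc B'_1\times\Spc H'_2)_{\Spc Z'}$ with predual $\Spc X=(\Spc B_1\times\Spc H_2)_{\Spc Z}$ (Lemma \ref{Theo:DirectSumOpt}, Item 1). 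Since the hypothesis $\nu_m\in\Spc B_1\cap\Spc H_2$ makes the replicated functional $\widetilde\nu_m=(\nu_m,\nu_m)$ a genuine element of $\Spc X$, the reduction spelled out after \eqref{Eq:MultiOptimizationProb} identifies the sum problem with the product problem \eqref{Eq:GenericOptimizationProb}.

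The decisive bookkeeping step is to write the penalty as a composite norm. By Item 3 of Proposition \ref{Prop:IsometricIso}, $\Op L_1$ maps $\Spc B'_1=\Op L_1^{-1}(\Spc M)$ isometrically onto $\Spc M$ and $\Op L_2$ maps $\Spc H'_2=\Op L_2^{-1}(L_2)$ isometrically onto $L_2$, so that $\|\Op L_1 f_1\|_{\Spc M}=\|f_1\|_{\Spc B'_1}$ and $\|\Op L_2 f_2\|_{L_2}=\|f_2\|_{\Spc H'_2}$. The regularizer then equals $\psi\big(\|(\|f_1\|_{\Spc B'_1},\|f_2\|_{\Spc H'_2})\|_{\Spc Z'}\big)$ with the strictly increasing strictly convex $\psi(t)=t^2$ and the weighted Euclidean outer norm $\|(a_1,a_2)\|_{\Spc Z'}=(\lambda_1 a_1^2+\lambda_2 a_2^2)^{1/2}$, which is strictly convex. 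Together with the assumed strict convexity of $E$, this verifies every hypothesis of Theorem \ref{Theo:Directprod}; nonemptiness and weak$^\ast$-compactness of $S$ follow from Theorem \ref{Theo:GeneralRepBanach} applied on $\Spc X'$.

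It then remains to specialize the two branches of Theorem \ref{Theo:Directprod}. Because $\Spc H'_2$ is Hilbertian and $\Spc Z'$ is strictly convex, \eqref{Eq:f0MultiHilbert} yields a unique component $f_{0,2}=\alpha_2\sum_{m=1}^M a_m\,\Op J_{\Spc H_2}\{\nu_m\}$; identifying the Riesz map via Item 5 of Proposition \ref{Prop:IsometricIso} as $\Op J_{\Spc H_2}=\Op L_2^{-1}\Op L_2^{-1\ast}$ gives $\varphi_m=\Op L_2^{-1}\Op L_2^{-1\ast}\{\nu_m\}$, hence \eqref{Eq:Solmixed2} with $\widetilde a_m=\alpha_2 a_m$. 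Since $\Spc B'_1$ is isometric to the non-strictly-convex space $\Spc M$, it is itself non-strictly-convex, so \eqref{Eq:f0MultiSparse} applies and the extremal points of $S|_{\Spc B_1}$ are finite combinations of at most $K_1\le M$ extremal points of $B_{\Spc B'_1}$. Applying Item 4 of Proposition \ref{Prop:IsometricIso} to the isometry $\Op L_1^{-1}:\Spc M\to\Spc B'_1$, these extremal points are the images $\Op L_1^{-1}\{e\}$ of the extremal points $e$ of $B_{\Spc M(\R^d)}$; the latter are the signed Dirac measures $\{\pm\delta(\cdot-\V\tau):\V\tau\inR^d\}$, so each extremal point of $B_{\Spc B'_1}$ is $\pm\,\Op L_1^{-1}\{\delta(\cdot-\V\tau)\}=\pm\,h_1(\cdot,\V\tau)$, and absorbing the signs into the coefficients produces \eqref{Eq:Solmixed1} with adaptive centers $\V\tau_k\inR^d$.

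The argument is thus largely a transport of known objects through two isometric isomorphisms, and the routine risk is simply to keep the adjoints and inverses straight (the Riesz map $\Op L_2^{-1}\Op L_2^{-1\ast}$ and the pullback $\Op L_1^{-1}$). The one genuinely external ingredient, and the step I would treat most carefully, is the classical identification of ${\rm Ext}(B_{\Spc M(\R^d)})$ with the signed Diracs --- the Banach-space fact that makes the adaptive-knot form \eqref{Eq:Solmixed1} possible; everything else follows mechanically from Lemma \ref{Theo:DirectSumOpt} and Proposition \ref{Prop:IsometricIso}.
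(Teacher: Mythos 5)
Your proposal is correct and follows essentially the same route as the paper: it casts \eqref{Eq:MixedOptimizationProb} as the $N=2$ instance of \eqref{Eq:MultiOptimizationProb} with $\psi(t)=t^2$ and the weighted Euclidean outer norm, then reads off \eqref{Eq:Solmixed2} from the Hilbertian branch \eqref{Eq:f0MultiHilbert} of Theorem \ref{Theo:Directprod} (with $\Op J_{\Spc H_2}=\Op L_2^{-1}\Op L_2^{-1\ast}$) and \eqref{Eq:Solmixed1} from the non-strictly-convex branch \eqref{Eq:f0MultiSparse}, transporting the signed Diracs ${\rm Ext}(B_{\Spc M(\R^d)})$ into $\Spc B'_1$ via Proposition \ref{Prop:IsometricIso}. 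The only difference is that you spell out the isometry bookkeeping (Items 3--5 of Proposition \ref{Prop:IsometricIso}) slightly more explicitly than the paper does, which is harmless.
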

\begin{proof} Problem \eqref{Eq:MixedOptimizationProb} is a special case of
\eqref{Eq:MultiOptimizationProb} with $N=2$, 
$\Spc X_1'=\Spc B'_1$, $\Spc X_2'=\Spc H'_2$, 
$\psi(\cdot)=|\cdot|^2$ (strictly convex), and $\|(x_1,x_2)\|_{\Spc Z'}=\sqrt{\lambda_1 x_1^2 + \lambda_2 x_2^2}$.
The hypotheses are such that the condition for the validity of
Theorem \ref{Theo:Directprod} are met. 
The parametric form of the second (unique) solution component is then given by \eqref{Eq:f0MultiHilbert} with $\widetilde{a}_m=\alpha_2 a_m$, $\nu_{m,n}=\nu_m$ 
and \revise{$\Op J_{\Spc H_2}=
\Op L_2^{-1}\Op L_2^{-1\ast}$}.
To characterize the extremal points of the first component, we use \eqref{Eq:f0MultiSparse}.
To identify the relevant atoms, we recall that the extremal points of the unit ball in $\Spc M(\R^d)$ are given by $\{\pm\delta(\cdot-\V \tau)\}_{\V \tau \in \R^d}$ \cite{Unser2020b}. 
We then make use of the fourth property in Proposition \ref{Prop:IsometricIso} to obtain the form of an extremal
point of the unit ball in $\Spc B_1'$: $e_k=\pm\Op L_1^{-1}\{\delta(\cdot-\V \tau_k)\}$ with shift parameter $\V \tau_k\inR^d$.
\end{proof}

\revise{As slight variant of \eqref{Eq:MixedOptimizationProb}, we may consider
an outer $\ell_1$ norm with $\psi(t)=t$,
which yields a regularization of the form $\lambda_1\|\Op L_1f_1\|_{\Spc M}
+ \lambda_2\|\Op L_2f_2\|_{L_2}$. The resulting solution takes the same functional form, with the caveat that there may no longer exist a single Hilbert-space component $f_{0,2}$ that would be common to all solutions.}

\section{Minimization of Semi-Norms} 
\label{Sec:SemiNorm}
We now consider the scenario of a native Banach space $\Spc X'$ that has a direct-sum decomposition as
$\Spc X'=\Spc U' \oplus \Spc N_\V p$, where $ \Spc U'$ is the dual of some primary Banach space $(\Spc U, \|\cdot\|_{\Spc U})$ and where the complementary space 
$\Spc N_\V p$ is spanned by the finite-dimensional basis $\V p=(p_1,\dots,p_{N_0})$.
Since $\Spc N_\V p={\rm span}\{p_n\}_{n=1}^{N_0}$ is of dimension $N_0$ and hence also reflexive,
the same holds true for its continuous dual $\Spc N'_\V p$.
Moreover, due to the direct-sum property, there exists a unique biorthonormal set of generators $p^\ast_1,\dots,p^\ast_{N_0} \in \Spc X$ such that
$\Spc N'_\V p={\rm span}\{p^\ast_n\}_{n=1}^{N_0}=\Spc N_{\V p^\ast}$ and
\begin{align*}
\langle p^\ast_m,p_n\rangle&=\delta_{m,n}\\
\langle p^\ast_n,s \rangle&=0
\end{align*}
for any $s \in \Spc U'$ and $m,n\in \{1,\dots,N_0\}$. This allows us to specify the 
canonical projector $\Proj_{\Spc N_\V p}: \Spc X' \to \Spc N_\V p$ as
\begin{align}
\label{Eq:ProjNp}
\Proj_{\Spc N_\V p}\{f\}=\sum_{n=1}^{N_0}  \langle p^\ast_n, f\rangle p_n
\end{align}
for any $f \in \Spc X'$. This identification also yields the complementary projector $\Proj_{\Spc U'}: \Spc X' \to \Spc U'$ as
$\Proj_{\Spc U'}=(\Identity-\Proj_{\Spc N_\V p})$.
\revise{Likewise, by interchanging the role of the synthesis and analysis functionals in \eqref{Eq:ProjNp}, we identify
the canonical projector $\Proj_{\Spc N_{\V p^\ast} }: \Spc X \to \Spc N_{\V p^\ast}$ as
$$
\Proj_{\Spc N_{\V p ^\ast}}\{\nu\}=\sum_{n=1}^{N_0}  \langle p_n, \nu\rangle p^\ast_n
$$
for any $\nu \in \Spc X$.} We now have the means to specify and bound the norm of any $f \in \Spc X'$ as
$$
\|f\|_{\Spc X'}=(\|f\|_{\Spc U'}, \|\Proj_{\Spc N_\V p}\{f\}\|_{\Spc N_p})_{\Spc Z'} \le \|f\|_{\Spc U'} + \|\Proj_{\Spc N_\V p}\{f\}\|_{\Spc N_p}
$$
where the functional $f \mapsto \|f\|_{\Spc U'}\eqdef \|\Proj_{\Spc U'}f\|_{\Spc U'}$ is a semi-norm (resp., a norm) over
$\Spc X'$ (resp., $\Spc U'$).
Because the $p_n$ are linearly independent, we also note that the standard description of
$\Spc U'$ as the complement of $\Spc N_\V p$ in $\Spc X'$, given by
$$\Spc U'=\{s \in \Spc X': \Proj_{\Spc N_\V p}\{s\}=\sum_{n=1}^{N_0} \langle p^\ast_n,s \rangle p_n=0\},$$
is equivalent to 
$$
\Spc U'=\{s \in \Spc X': {\V p}^\ast(s)=\V 0\}
$$
where ${\V p}^\ast(f)=(\langle p^\ast_1,f \rangle,\dots,\langle p^\ast_{N_0},f \rangle)$ is a vector-valued functional $\Spc X'\to \R^{N_0}$. Likewise, we have that
$$
\Spc U=\{u \in \Spc X: \V p(u)=\V 0\},
$$
which, once again, capitalizes on the biorthonormality of $({\V p}^\ast,\V p)$. 

The consideration of such a direct-sum decomposition of a Banach space $\Spc X'$ is relevant to inverse problems because it suggests that one can substitute the original regularization term $\|f\|_{\Spc X'}$ by the semi-norm $\|f\|_{\Spc U'}$ when one wants to favor solutions with a strong contribution in $\Spc N_\V p$, the null space of the semi-norm. 
This is a standard technique in spline theory, albeit within the classical context of RKHS spaces
with $\|f\|_{\Spc U'}=\|\Lop f\|_{L_2}$, where $\Lop$ is a suitable differential operator (e.g., a higher-order derivative or fractional Laplacian) with a null space $\Spc N_\V p$ that consists of polynomials of degree $n$ \cite{deBoor1966, Duchon1977, Bezhaev2001}. We now show how this technique can be extended in full generality to Banach spaces.
The basic requirement for this extension is that the inverse problem be well-posed over $\Spc N_\V p$.
This is made explicit in \eqref{Eq:Wellposed}, which is equivalent to the fourth condition in Theorem \ref{Theo:GeneralRepSemiBanach}.

\begin{theorem}[General representer theorem for Banach semi-norms]
\label{Theo:GeneralRepSemiBanach}
Let us consider the following setting:
\begin{itemize}
\item A dual pair of Banach spaces $(\Spc X=\Spc U \oplus \Spc N_{\V p^\ast},\Spc X'=\Spc U' \oplus \Spc N_\V p)$, where $\Spc N_\V p=\Spc N'_{\V p^\ast}$ is the vector space spanned by the finite-dimensional basis $\V p=(p_1,\dots,p_{N_0})$.
\item The analysis subspace $\Spc N_\V \nu={\rm span}\{\nu_m\}_{m=1}^M \subset \Spc X$,
with the $\nu_m$ being linearly independent and $M>N_0$.
\item The linear measurement operator 
$\V \nu: \Spc X' \to \R^M: f \mapsto \big(\langle \nu_1,f \rangle, \dots,\langle \nu_M, f \rangle\big)$.

\item \revise{The vectors $\M v_1,\dots,\M v_{N_0} \in\R^M$ with 
$[\M v_n]_m=\langle \nu_m,p_n \rangle$ are linearly independent;
they admit a complementary set $\{\M u_1, \dots, \M u_{M-N_0}\}$ in $\R^M$ such that $\R^M={\rm span} \{\M v_n\}_{n=1}^{N_0}  \oplus {\rm span} \{\M u_m\}_{m=1}^{M-N_0}$.}

\item A  proper, lower-semicontinuous, coercive, 
and convex loss functional $E: \R^M \times \R^M \to \R^{+}\cup \{+\infty\}$.
\item Some arbitrary strictly increasing and convex function $\psi: \R^+ \to \R^+$. 
\end{itemize}
Then, for any fixed $\V y\inR^M$, the solution set of the generic optimization problem
\begin{align}
\label{Eq:GenericOptimizationProb2}
S=\arg \min_{f \in \Spc X'} \left( E\big(\V y, \V \nu(f)\big)+ \psi\left( \|f\|_{\Spc U'}\right)\right)
\end{align}
is nonempty, convex, and weak$^\ast$-compact. 

If $E$ is strictly convex, or  if it imposes the  equality constraint
$\V y = \V \nu (f)$, then any solution $f_0 \in
S\subset \Spc X'$ has a unique decomposition as $f_0=p_0+s_0$ with
$\revise{p_0} \in \Spc N_\V p$
and $s_0 \in \Spc U'$ the $(\Spc U',\Spc U)$-conjugate of a common
$\widetilde{\nu}_0 \in \Spc U$
whose generic form is
\begin{align}
\label{Eq:dualmodel}
\widetilde{\nu}_0=\sum_{m=1}^{M-N_0} a_m \widetilde{\nu}_m \in \Spc N_\V \nu \cap \Spc U
\end{align}
\revise{ 
with suitable coefficients $\V a=(a_m) \inR^{M-N_0}$ and 
reduced basis functions $\widetilde{\nu}_m=\widetilde{\M u}_m^T\V \nu \in \Spc U$, where $\widetilde{\M u}_m \in\R^M$ is the unique (biorthogonal) vector such that $\widetilde{\M u}^T_m\M v_n=0$ and $\widetilde{\M u}^T_m\M u_{m'}=\delta_{m,m'}$
for any $m,m' \in \{1,\dots,M-N_0\}$ and $n\in \{1,\dots,N_0\}$.}
%
%
%
%
%

Depending on the Banach characteristics of $\Spc U'$, this then results in the following explicit description of the solution(s):
\begin{itemize}
\item If $\ \Spc U'$ is a Hilbert space and $\psi$ is strictly convex, then
the solution $f_0$ is unique and admits the linear representation 
\begin{align}
\label{Eq:f0lin2}
f_0=\sum_{m=1}^{M-N_0} a_m \varphi_m +\sum_{n=1}^{N_0} b_{n} p_n,
\end{align}
with coefficients $(\V a,\V b)\inR^{M}$ and 
basis functions $p_n\in \Spc N_\V p$, $\varphi_m=\Op J_\Spc U\{\widetilde{\nu}_m\}\in \Spc U'$, where
$\Op J_\Spc U$ is the Riesz map $\Spc U \to \Spc U'$.
\item If $\ \Spc U'$ is a strictly convex Banach space and $\psi$ is strictly increasing, then
the solution is unique and admits the parametric representation
\begin{align}
\label{Eq:f0nonlin2}
f_0=\Op J_{\Spc U}\left\{\sum_{m=1}^{M-N_0} a_m \widetilde\nu_m\right\} +\sum_{n=1}^{N_0} b_{n} p_n
\end{align}
where
$\Op J_\Spc U$ is the (nonlinear) duality operator $\Spc U \to \Spc U'$.
\item Otherwise, when $\Spc U'$ is not strictly convex,  the solution set is the weak*-closure of the convex hull of its extremal points, which can all be expressed as
\begin{align}
\label{Eq:f0extreme2}
f_0=\sum_{k=1}^{K_0} c_k e_k +\sum_{n=1}^{N_0} b_{n} p_n
\end{align}
for some $K_0\le (M-N_0)$, $c_1,\dots,c_{K_0} \inR$, where $e_1,\dots,e_{K_0}\in \Spc U'$ are some extremal points of the unit ball $B_{\Spc U'}=\{s \in \Spc U: \|s\|_{\Spc U'}\le 1\}$.
The vector $\V b=(b_{n})\inR^{N_0}$ that characterizes the null-space component of $f_0$ is unique and common to all solutions whenever
$E$ is strictly convex and $\Spc N_{\V p^\ast}\subset \Spc N_\V \nu$.
\end{itemize}

\end{theorem}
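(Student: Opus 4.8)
The plan is to reduce the semi-norm problem to the genuine-norm setting of Theorem \ref{Theo:GeneralRepBanach}, applied over the predual pair $(\Spc U,\Spc U')$ with the reduced functionals $\widetilde\nu_m$. I would organize the argument in three stages: existence, reduction to an equality-constrained problem over $\Spc U'$, and identification of the solution components.

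For existence, I would first rewrite the objective as a joint minimization over $(s,\V b)\in \Spc U'\times\R^{N_0}$ via the unique splitting $f=s+\sum_{n=1}^{N_0} b_n p_n$ afforded by the direct sum $\Spc X'=\Spc U'\oplus\Spc N_\V p$. Coercivity then rests on two independent mechanisms: when $\|s\|_{\Spc U'}\to\infty$ the term $\psi(\|s\|_{\Spc U'})$ diverges (a strictly increasing convex function on $\R^+$ is necessarily unbounded), whereas when $\|\V b\|\to\infty$ with $\|s\|_{\Spc U'}$ bounded the full-column-rank property of $\M V=[\M v_1\,\cdots\,\M v_{N_0}]$ forces $\|\V\nu(f)\|\to\infty$, so that the coercivity of $E$ makes the data term diverge. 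This is exactly where the hypotheses absent from Theorem \ref{Theo:GeneralRepBanach}---coercivity of $E$ and the invertibility condition on the $\M v_n$---are indispensable, since the semi-norm no longer controls the $\Spc N_\V p$-component. Because $f\mapsto\V\nu(f)$ and $f\mapsto\Proj_{\Spc U'}\{f\}$ are weak$^\ast$-continuous (the $\nu_m$ and $p_n^\ast$ lie in the predual $\Spc X$) while $E$ and $\psi(\|\cdot\|_{\Spc U'})$ are lower-semicontinuous, the objective is weak$^\ast$-lsc; combining coercivity with the Banach--Alaoglu theorem then yields a nonempty, convex, weak$^\ast$-compact set $S$.

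Assuming $E$ strictly convex (or that it enforces $\V y=\V\nu(f)$), I would next show by the standard convexity argument that all solutions share a common measurement vector $\V z_0=\V\nu(f_0)$: were the measurements distinct, the midpoint would strictly lower $E$ while the triangle inequality for the semi-norm together with the convexity and monotonicity of $\psi$ would prevent the regularizer from increasing, contradicting optimality. Once $\V z_0$ is fixed, minimizing $\psi(\|f\|_{\Spc U'})$ is equivalent to minimizing $\|s\|_{\Spc U'}$ subject to $\V\nu(f)=\V z_0$. Splitting $\R^M={\rm span}\{\M v_n\}\oplus{\rm span}\{\M u_m\}$, the constraint decouples: its component along the $\M u_m$ reads $\langle\widetilde\nu_m,s\rangle=\widetilde{\M u}_m^T\V z_0$, where $\widetilde\nu_m=\widetilde{\M u}_m^T\V\nu$ annihilates $\Spc N_\V p$ (because $\widetilde{\M u}_m^T\M v_n=0$, whence $\widetilde\nu_m\in\Spc U$), while its component along the $\M v_n$ uniquely determines $\V b$ from $s$ through the invertibility of $\M V$. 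The $s$-part is therefore precisely an equality-constrained instance of Theorem \ref{Theo:GeneralRepBanach} over the dual pair $(\Spc U,\Spc U')$ with the $M-N_0$ linearly independent functionals $\{\widetilde\nu_m\}$ (their independence follows from that of the $\widetilde{\M u}_m$ and of the $\nu_m$). Invoking that theorem gives $s_0\in J(\widetilde\nu_0)$ with $\widetilde\nu_0=\sum_{m=1}^{M-N_0}a_m\widetilde\nu_m\in\Spc N_\V\nu\cap\Spc U$, and its three cases transcribe verbatim into the Hilbertian expansion in $\varphi_m=\Op J_\Spc U\{\widetilde\nu_m\}$, the nonlinear form $\Op J_\Spc U\{\sum a_m\widetilde\nu_m\}$, and the extremal-point expansion $\sum_{k=1}^{K_0}c_k e_k$ with $K_0\le M-N_0$; adding back $p_0=\sum_n b_n p_n$ produces \eqref{Eq:f0lin2}, \eqref{Eq:f0nonlin2}, and \eqref{Eq:f0extreme2}. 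For the uniqueness of $\V b$: when $\Spc U'$ is strictly convex $s_0$ is unique, hence so is $\V b$; when $\Spc N_{\V p^\ast}\subset\Spc N_\V\nu$ each $b_n=\langle p_n^\ast,f_0\rangle$ is a fixed linear combination of the entries of $\V z_0$ and is therefore common to all solutions once $E$ is strictly convex.

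The main obstacle I anticipate is the existence stage together with the clean decoupling above: one must check that the two coercivity mechanisms genuinely reinforce each other rather than one collapsing while the other is active, and that the reduced $\widetilde\nu_m$ are bona fide, linearly independent elements of $\Spc U$ so that Theorem \ref{Theo:GeneralRepBanach} applies with no loss. The remaining bookkeeping is entirely finite-dimensional---it lives in the decomposition $\R^M={\rm span}\{\M v_n\}\oplus{\rm span}\{\M u_m\}$ and its biorthogonal system---which is precisely why the hypotheses $M>N_0$ and the linear independence of the $\M v_n$ are essential.
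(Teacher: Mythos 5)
Your proposal is correct and follows essentially the same route as the paper: the same two-case coercivity analysis (unboundedness of $\psi$ when $\|s\|_{\Spc U'}\to\infty$ versus coercivity of $E$ combined with the rank condition on $\M V$ when the null-space component blows up), followed by the same reduction of the constraint $\V\nu(f)=\V z_0$ through the biorthogonal splitting $\R^M={\rm span}\{\M v_n\}\oplus{\rm span}\{\M u_m\}$ to an equality-constrained minimization of $\|s\|_{\Spc U'}$ over $\Spc U'$ with the $M-N_0$ reduced functionals $\widetilde\nu_m$, to which Theorem \ref{Theo:GeneralRepBanach} is applied verbatim. The identification of $\V b$ via $\widetilde{\M V}^T$ in the case $\Spc N_{\V p^\ast}\subset\Spc N_{\V\nu}$ also matches the paper's argument.
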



Before proceeding with the proof of Theorem \ref{Theo:GeneralRepSemiBanach}, we detail the way in which the reduced basis  $\widetilde{\V \nu}=(\widetilde\nu_1, \dots,\widetilde\nu_{M-N_0})$ in \eqref{Eq:dualmodel} 
 is constructed. To that end, we first define the cross-correlation matrix $
 \M V=[{\M v}_1\  \cdots \ {\M v}_{N_0}]=\V \nu(\V p) \inR^{M \times N_0}$ with
$[\M V]_{m,n}=\langle \nu_m,p_n\rangle$.

\begin{proposition}[Direct-sum decomposition of the measurement space]
\label{Prop:DirectSum}
Let $\V \nu=(\nu_1,\dots,\nu_M) \in \Spc X^M$ with $\Spc X=\Spc U \oplus \Spc N_{\V p^\ast}$ and $\V p=(p_1,\dots,p_{N_0}) \in (\Spc X')^{N_0}$
be two vectors of linear functionals such that the matrix $\M V=\V \nu(\V p) \inR^{M \times N_0}$ is of rank $N_0$.
Then, one can always find three matrices $\M U\inR^{M \times (M-N_0)}$, $\widetilde{\M V}\in \R^{M \times N_0}$, and $\widetilde{ \M U} \inR^{M \times (M-N_0)}$ 
such that 
\begin{align}
\label{Eq:Biortho2}
\left[
\begin{array}{c}
 \widetilde{\M U}^T \\
    \widetilde{\M V}^T
 \end{array}
\right] \left[
\begin{array}{cc}
 {\M U} &     {\M V} 
 \end{array}
\right]
=\M I_M.
\end{align}
Based on these matrices, $\V \nu \in \Spc X^M$ has a unique and reversible decomposition as
\begin{align}
\label{Eq:Decompnu}
\V \nu 
= {\M U} \widetilde{\V \nu} + {\M V} \widetilde{\V p}^\ast,
\end{align}
where
\begin{align}
\label{Eq:ReducedBasis}
\widetilde{\V \nu}&=(\widetilde{\nu}_1,\dots, \widetilde{\nu}_{M-N_0})=\widetilde{\M U}^T\V \nu  \in \Spc U^{M-N_0}\\
\label{Eq:ReducedBasis2}\widetilde{\V p}^\ast&=(\widetilde{p}^\ast_1,\dots, \widetilde{p}^\ast_{N_0})=\widetilde{\M V}^T\V \nu
\in \Spc X^{N_0}.
\end{align}
In effect, this yields a decomposition of the measurement space $\Spc N_{\V \nu}={\rm span}\{{\nu}_m\}_{m=1}^{M}$ as $\Spc N_{\V \nu}=\Spc N_{\widetilde{\V p}^\ast} \oplus \Spc N_{\widetilde{\V \nu}}$ 
with $\Spc N_{\widetilde{\V \nu}}={\rm span}\{\widetilde{\nu}_m\}_{m=1}^{M-N_0} \subset \Spc U$.
In particular, if $\Spc N_{\V p^\ast}\subset \Spc N_\V \nu$, then there is a unique matrix 
$\widetilde{\M V}\in \R^{M \times N_0}$ of rank $N_0$ such that
$\widetilde{\M V}^T  \V \nu={\V p^\ast}$ 
and such that the decomposition still applies with $\Spc N_{\V \nu}=\Spc N_{\V p^\ast} \oplus \Spc N_{\widetilde{\V \nu}}$.
\end{proposition}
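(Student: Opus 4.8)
The plan is to recognize that, modulo the identification $\R^M \ni \V c \mapsto \V c^T \V \nu \in \Spc N_\V \nu$ (a linear isomorphism, since the $\nu_m$ are assumed linearly independent in Theorem \ref{Theo:GeneralRepSemiBanach}), the whole statement is a finite-dimensional change-of-basis fact about biorthogonal systems. First I would build the four matrices purely algebraically. As $\M V = \V\nu(\V p)$ has rank $N_0$, its columns are linearly independent in $\R^M$, so I can pick any $\M U \in \R^{M\times(M-N_0)}$ whose columns span a complement of the column space of $\M V$, making $[\,\M U\ \M V\,]$ an invertible $M\times M$ matrix. Defining the stacked matrix $\bigl[\begin{smallmatrix}\widetilde{\M U}^T\\ \widetilde{\M V}^T\end{smallmatrix}\bigr] \eqdef [\,\M U\ \M V\,]^{-1}$ and reading off its top $M-N_0$ rows as $\widetilde{\M U}^T$ and its bottom $N_0$ rows as $\widetilde{\M V}^T$ gives \eqref{Eq:Biortho2} by construction; expanding the block product records the four relations $\widetilde{\M U}^T\M U = \M I_{M-N_0}$, $\widetilde{\M V}^T\M V = \M I_{N_0}$ and $\widetilde{\M U}^T\M V = \widetilde{\M V}^T\M U = \M 0$, which I will reuse below.

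Since $[\,\M U\ \M V\,]$ is square and invertible, its left and right inverses coincide, so I also have $\M U\widetilde{\M U}^T + \M V\widetilde{\M V}^T = \M I_M$. Applying this operator identity to the functional vector $\V\nu \in \Spc X^M$ yields at once the decomposition \eqref{Eq:Decompnu}, with $\widetilde{\V\nu} = \widetilde{\M U}^T\V\nu$ and $\widetilde{\V p}^\ast = \widetilde{\M V}^T\V\nu$ as in \eqref{Eq:ReducedBasis}--\eqref{Eq:ReducedBasis2}; reversibility is immediate because the transform is invertible.

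The one place where the Banach structure genuinely enters --- and the step I expect to require the most care --- is the claim $\widetilde{\V\nu}\in\Spc U^{M-N_0}$. Here I would translate the algebraic relation $\widetilde{\M U}^T\M V = \M 0$ into a statement about the annihilator, computing for every $m$ and $n$
$$\langle p_n,\widetilde{\nu}_m\rangle = \sum_{m'=1}^M [\widetilde{\M u}_m]_{m'}\,\langle \nu_{m'},p_n\rangle = [\widetilde{\M U}^T\M V]_{m,n} = 0,$$
where $[\M V]_{m',n} = \langle\nu_{m'},p_n\rangle$. By the characterization $\Spc U = \{u\in\Spc X:\V p(u)=\V 0\}$ recalled earlier, this places each $\widetilde{\nu}_m$ in $\Spc U$ (while the $\widetilde p^\ast_n$ stay in $\Spc X$ only). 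For the direct-sum conclusion I would then transport the splitting $\R^M = {\rm range}(\widetilde{\M U})\oplus{\rm range}(\widetilde{\M V})$ --- valid because $[\,\widetilde{\M U}\ \widetilde{\M V}\,]$, being the transpose of $[\,\M U\ \M V\,]^{-1}$, is invertible --- through the isomorphism $\R^M\cong\Spc N_\V\nu$, obtaining $\Spc N_\V\nu = \Spc N_{\widetilde{\V\nu}}\oplus\Spc N_{\widetilde{\V p}^\ast}$ with $\Spc N_{\widetilde{\V\nu}} = {\rm span}\{\widetilde\nu_m\}\subset\Spc U$ by the previous computation.

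Finally, for the special case $\Spc N_{\V p^\ast}\subset\Spc N_\V\nu$, each $p^\ast_n$ lies in $\Spc N_\V\nu$ and hence admits a unique expansion $p^\ast_n = \sum_m[\widetilde{\M V}]_{m,n}\nu_m$; this pins down a unique $\widetilde{\M V}$ with $\widetilde{\M V}^T\V\nu = \V p^\ast$, and the biorthonormality $\langle p^\ast_{n'},p_n\rangle = \delta_{n',n}$ forces $\widetilde{\M V}^T\M V = \M I_{N_0}$, so $\widetilde{\M V}$ has rank $N_0$. The remaining technical point is to check that this prescribed $\widetilde{\M V}$ can still be completed into a full biorthogonal system: I would choose $\M U$ with columns spanning $\ker(\widetilde{\M V}^T)$ and note that $\widetilde{\M V}^T\M V = \M I_{N_0}$ forces ${\rm range}(\M U)\cap{\rm range}(\M V)=\{0\}$ (if $\M V\V c\in\ker\widetilde{\M V}^T$ then $\V c=\widetilde{\M V}^T\M V\V c=0$), so $[\,\M U\ \M V\,]$ is again invertible and $\widetilde{\M U}$ is recovered as the top block of its inverse, exactly as in the first step, now with the complementary summand identified as $\Spc N_{\V p^\ast}$ itself.
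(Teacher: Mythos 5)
Your proposal is correct and follows essentially the same route as the paper's proof: both complete the rank-$N_0$ column space of $\M V$ to all of $\R^M$ (you via a block-matrix inverse, the paper via a completed basis and its biorthonormal dual, which is the same construction), verify $\widetilde{\nu}_m\in\Spc U$ from $\widetilde{\M U}^T\M V=\M 0$, transport the splitting of $\R^M$ to $\Spc N_{\V\nu}$ through the linear independence of the $\nu_m$, and in the special case first pin down $\widetilde{\M V}$ from $\V p^\ast=\widetilde{\M V}^T\V\nu$ before completing the system. Your explicit check that the prescribed $\widetilde{\M V}^T$ reappears as the bottom block of $[\,\M U\ \M V\,]^{-1}$ is a welcome detail the paper leaves implicit.
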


\begin{proof} 
Since the 
vectors $\M v_1,\dots,\M v_{N_0} \in \R^M$ are linearly independent,
they can always be completed by adding some vectors
$\M v_{N_0+1}=\M u_1,\dots,\M v_{M}=\M u_{M-N_0}$ to form a basis of $\R^M$.
The linear independence of the resulting family (basis property) is equivalent to the existence of a unique dual basis
$\widetilde{\M v}_1,\dots,\widetilde{\M v}_{M} \in \R^M$
such that
\begin{align}
\label{Eq:biortho}
\langle \widetilde{\M v}_m, \M v_n\rangle =\widetilde{\M v}^T_m \M v_n=\delta_{m,n}
\end{align}
for $m,n \in \{1,\dots,M\}$ (biorthonormality property). 
This means that any vector \revise{$\V y\inR^M$} has a unique (and reversible) decomposition as $\V y=\sum_{m=1}^M \langle  \widetilde{\M v}_m, \V y\rangle \M v_m$.
By collecting the expansion coefficients in the two vectors
$\widetilde {\V b}=(\langle  \widetilde{\M v}_1, \V y\rangle,\dots,\langle  \widetilde{\M v}_{N_0},\V y\rangle)$
and $\widetilde {\V y}=(\langle  \widetilde{\M v}_{N_0+1}, \V y\rangle,\dots,\langle  \widetilde{\M v}_M,\V y\rangle)$ and identifying the matrices
$\widetilde{\M V}=[\widetilde{\M v}_{1}\ \cdots\ \widetilde{\M v}_{N_0}]$,
$\M U=[\M v_{N_0+1}\ \cdots \ \M v_{M}]$, and $\widetilde{\M U}=$ ${[\widetilde{\M v}_{N_0+1}\ \cdots\ \widetilde{\M v}_{M}]}$, we then observe that 
the decomposability of $\V y\inR^M$ is equivalent to
\begin{align}
\label{Eq:directsumRM}
\V y 
=
 \left[
\begin{array}{cc}
 {\M U}  &  {\M V} 
 \end{array}
\right] \left[
\begin{array}{c}
\widetilde{\V y}  \\
\widetilde{\V b}  
 \end{array}
\right] 
\end{align}
with
\begin{align}
\label{Eq:uvector2}
\widetilde{\V y} 
=\widetilde{\M U}^T \V y \in \R^{M-N_0}, \quad
\widetilde{\V b} 
=  \widetilde{\M V}^T \V y \in \R^{N_0}.
\end{align}
Likewise, the enabling biorthonormality property \eqref{Eq:biortho} is equivalent 
to the invertibility condition \eqref{Eq:Biortho2}.

By substituting $\V y\inR^M$, $\widetilde{\V y}\inR^{M-N_0}$, and $\widetilde{\V b}\inR^{N_0}$ by $\V \nu(f), \widetilde{\V \nu}(f)$, and
$\widetilde{\V p}^\ast(f)$, respectively, we
 then rephrase \eqref{Eq:directsumRM} and \eqref{Eq:uvector2} in term of functionals, which yields the reversible decomposition described by
 \eqref{Eq:Decompnu}, \eqref{Eq:ReducedBasis}, and \eqref{Eq:ReducedBasis2}.
To prove that $\Spc N_{\widetilde{\V \nu}}\subset \Spc U=\{u \in \Spc X: \V p(u)=0\}$, we invoke the invertibility condition \eqref{Eq:Biortho2}, which yields $\V p(\widetilde{\V \nu})=
\big(\widetilde{\V \nu}(\V p)\big)^T=(\widetilde{\M U}^T\M V)^T=\M 0^T$.
Since, for any $\V a\inR^M$, we have that
$\V a^T\M U\widetilde{\V \nu}\in {\rm span}\{ \widetilde{\nu}_n\}_{n=1}^{M-N_0}$ and $\V a^T\M V\widetilde{\V p}^\ast \in {\rm span}\{\widetilde{p}^\ast_n\}_{n=1}^{N_0}$ with the linear expansion of $\V a^T\V \nu \in \Spc N_\V \nu$ in the corresponding basis being reversible, we can interpret \eqref{Eq:Decompnu} as the direct-sum decomposition
 $\Spc N_\V \nu=\Spc N_{\widetilde{\V \nu}}\oplus \Spc N_{\widetilde{\V p}^\ast}$.

The inclusion $\Spc N_{\V p^\ast}\subset \Spc N_\V \nu$, together with the linear independence of the $\nu_m$, is equivalent to the existence of a unique transformation matrix $\widetilde{\M V}^T$
of rank $N_0$ such that  ${\V p}^\ast=\widetilde{\M V}^T \V \nu$. 
While this sets the matrix $\widetilde{\M V} \inR^{M \times N_0}$, one is still left with sufficiently many degrees of freedom
to select the complementary matrices $\M U$ and $\widetilde{\M U}$ such that
\eqref{Eq:Biortho2} holds.
\end{proof}

We note that, irrespective of whether we fix $\widetilde{\M V}$ (second part of Proposition \ref{Prop:DirectSum}) or not,  there are generally infinitely many admissible choices
for $\M U$ 
in \eqref{Eq:Biortho2} 
and, hence, for the construction of the reduced basis $\widetilde{\V \nu}$ defined by \eqref{Eq:ReducedBasis}.
\revise{This does not contradict the unicity of \eqref{Eq:dualmodel}. Indeed, different choices of extension correspond to different biorthogonal bases $\widetilde{\V \nu}=(\widetilde\nu_1, \dots,\widetilde\nu_{M-N_0})$ of the same subspace.}


\subsection{Proof of Theorem \ref{Theo:GeneralRepSemiBanach}}

\begin{proof} 
\item {\em (i) Existence}: The classical conditions that ensure the existence of a minimizer of the functional $J(f)=E\big(\V y, \V \nu(f)\big)+ \psi\left( \|f\|_{\Spc U'}\right)$ are that $J(f)$ should be proper, convex, (weak$\ast$-)lower-semi-continuous, and coercive over $\Spc X'$.
These higher-level properties also imply that the solution set $S$ is convex, and weak$^\ast$-compact.

The first three conditions follow from the listed assumptions and the general properties of a (semi-)norm---see argumentation in the proof of Theorem \ref{Theo:GeneralRepBanach} in \cite{Unser2019c}.
To establish coercivity, we recall that the hypothesis $\nu_m \in \Spc X$ implies the continuity of $\nu_m: \Spc X' \to \R$ due to the continuous embedding of $\Spc X$ in its bidual $\Spc X''=(\Spc X')'$. Consequently, there exists some constant $A>0$ such that
\begin{align}
\|\V \nu(f)\|_2 \le  A \|f\|_{\Spc X'}
\end{align}
for all $f \in \Spc X'$. Likewise, the linear independence of the $\M v_n$ 
and the property that all finite-dimensional norms are equivalent implies the existence of $B>0$ such that, for any $p\in \Spc N_\V p$,
\begin{align}
\label{Eq:Wellposed}
B \|p\|_{\Spc N_\V p}\le \|\V \nu(p)\|_2.
\end{align}
By using the direct-sum decomposition
$f=s+p$ with $(s,p) \in \Spc U' \times \Spc N_\V p$, $\|f\|_{\Spc X'}\le \|s\|_{\Spc U'}+\|p\|_{\Spc N_{\V p}}$, and $\|s\|_{\Spc U'}=\|f\|_{\Spc U'}$, we readily deduce that
\begin{align}
\|\V \nu(f)\|_2 & \ge \|\V \nu(p)\|_2 - \|\V \nu(s)\|_2 
\ge B \|p\|_{\Spc N_\V p} -A \|s \|_{\Spc U'} \ge  B \|f\|_{\Spc X'} -(A + B) \|f \|_{\Spc U'},
\label{Eq:bound2} 
\end{align}
where we have made use of  the triangle inequality and the two previous bounds.
Let us now consider some sequence $(f_m)$ in $\Spc X'$ with $f_m=(s_m,q_m) \in \Spc U' \times \Spc N_\V p$ such that $\|f_m\|_{\Spc X'} \ge \|f_n\|_{\Spc X'}$ for $m\ge n$ and $\lim_{m\to\infty} \|f_m\|_{\Spc X'}=\infty$.
Then, there are two possible asymptotic behaviors for the norm of $s_m=\Proj_{\Spc U'} f_m$:
\begin{enumerate}
\item The quantity $\|s_m\|_{\Spc X'}=\|f_m\|_{\Spc U'}\to \infty$ as $m\to \infty$, in which case 
$J(f_m)\to \infty$ due to the unboundedness of $\psi: \R^+ \to \R^+$ at infinity.
\item There exists a constant $C$ such that $\|f_m\|_{\Spc U'}\le C$ for all $m$. 
By invoking \eqref{Eq:bound2}, we get that $\|\V \nu(f_m)\|_2\to \infty$ as $m\to\infty$, which, in turn, gives $J(f_m)\to\infty$, due to the
coercivity of $E(\cdot,\V y)$.
\end{enumerate}
In summary,
$J(f)\to \infty$ as $\|f\|_{\Spc X'}\to \infty$, which is the required coercivity property.\\


\item {\em (ii) Representation of a solution}:  
The underlying direct-sum property implies that  any $f_0 \in S \subset \Spc X'$
has a unique decomposition as $f_0=s_0+p_0$ with $(s_0,p_0) \in \Spc U' \times \Spc N_\V p$.
To derive the parametric form of a solution, we momentarily assume that $p_0$ (and, hence, $\V \nu(p_0)\inR^M$)
and $\V y_0=\V \nu(f_0) \inR^M$ are known. By making use of the decomposition of the measurement space in Proposition \ref{Prop:DirectSum}, we observe that the penalized component $s_0 \in \Spc U'$ solves the equivalent constrained-optimization problem
\begin{align}
\label{Eq:s01}
s_0 \in S_{p_0, \V y_0}=\arg \min_{s \in \Spc U'} \|s\|_{\Spc U'} \mbox{ s.t. }
\V y_0-\V \nu(p_0)
=\V \nu(s)=\M U\widetilde{\V \nu}(s) + \M V \widetilde{\V p}^\ast(s),\end{align}
where $\widetilde {\V \nu}=\widetilde{\M U}^T \V \nu 
\in \Spc U^{M-N_0}$ and $\widetilde {\V p}^\ast=\widetilde{\M V}^T \V \nu \in \Spc X^{N_0}$.
We now show that the effective number of linear constraints in 
\eqref{Eq:s01} is actually $(M-N_0)$ and not $M$, as may be thought on first inspection.
To that end, we multiply the 
linear constraint by $\widetilde{\M U}^T\in \R^{(M-N_0) \times M}$ on both sides and use the properties that $\widetilde{\M U}^T\M U=\M I_{M-N_0}$ and
$\widetilde{\M U}^T\M V=\M 0$ (see \eqref{Eq:Biortho2} in Proposition \ref{Prop:DirectSum}). This yields
\begin{align}
\label{Eq:s02}
s_0 \in S_{p_0, \V y_0}=\arg \min_{s \in \Spc U'} \|s\|_{\Spc U'} \mbox{ s.t. }
\widetilde{\V \nu}(s)=\widetilde{\V y}_0,
\end{align}
where $\widetilde{ \V y}_0=\widetilde{\M U}^T \big(\V y_0 -\V \nu(p_0)\big)=\widetilde{\M U}^T \V y_0 \in \R^{M-N_0}$. This latter simplification occurs because
$\widetilde{\M U}^T\V \nu(p) =\widetilde{ \V \nu}(p)=0$ for all $p \in \Spc N_\V p$ by construction. 
The theoretical significance of the cancellation of  $\widetilde{\M U}^T\V \nu(p_0)$ is that the above manipulation does not depend on $p_0$, so that $S_{p_0, \V y_0}=S_{\V y_0}$. In effect, this
means that the characterization of the optimal $s_0$ in \eqref{Eq:s02} 
holds for all solutions that share the same measurements $\V y_0$.
This is true, in particular, when $E$ is strictly convex, by a standard argument in convex analysis.
The description of
$s_0 \in \Spc U'$ as the $(\Spc U',\Spc U)$-conjugate of a common
$\widetilde{\nu}_0 \in \Spc N_{\widetilde{\V \nu}}$, as well as  \eqref{Eq:f0lin2},
\eqref{Eq:f0nonlin2},
and \eqref{Eq:f0extreme2}, then 
follow from Theorem \ref{Theo:GeneralRepBanach}.

For the special scenario $\Spc N_{\V p^\ast} \subset \Spc N_\V \nu$, 
we select $\widetilde{\M V}$ such that ${\V p^\ast}=\widetilde{\M V}^T\V \nu$ (see the second part of Proposition \ref{Prop:DirectSum}) and are then able to
obtain the expansion of coefficients of $p_0=\Proj_{\Spc N_\V p}\{f_0\}$ directly from the measurements
as $\V b={\V p^\ast}(f_0)=\widetilde{\M V}^T\V y_0$, which establishes this part of the solution as well
for all $f_0\in S$.
\end{proof}

When the (semi)-norm $\|\cdot\|_{\Spc U'}$ is strictly convex, Theorem \ref{Theo:GeneralRepSemiBanach} states that the unique solution $f_0$ lives in a finite-dimensional manifold that is parameterized by $\V b \in \R^{N_0}$ (for the null-space component $p_0$)
and $\V a \in \R^{M-N_0}$ (for the preimage $\widetilde{\nu}_0$ of the penalized component
$s_0=(\widetilde{\nu}_0)^\ast$).
While the two primary expansions are linear, the high-level ingredient of the representation is the duality mapping, 
which introduces a nonlinearity in the non-Hilbert scenario.
At any rate, the main point is that the intrinsic dimensionality of the solution space is still $M$, as in the case of Theorem 1, except that the repartition is now very different, with the contribution of the null-space component $p_0\in \Spc N_\V p$ being maximized since it is no longer penalized.

An important outcome of Theorem \ref{Theo:GeneralRepSemiBanach} is that it becomes possible to characterize the full solution set $S$ via the specification of a single pair $(p_0,\widetilde \nu_0) \in \Spc N_\V p \times \Spc U$.
For the challenging cases where there are multiple solutions (third scenario), this requires the additional assumption
that $\Spc N_{\V p^\ast} \subset \Spc N_\V \nu$, which has the desirable effect of decoupling the determination $p_0$ from that of $\widetilde\nu_0$. It turns out that this decoupling 
is applicable to most practical problems that involve a semi-norm regularization. The key is that it is generally possible to adapt the semi-norm topology to the problem at hand by selecting a biorthonormal system $({\V p}^\ast,\V p)$ with
$p^\ast_1,\dots,p^\ast_{N_0} \in {\rm span}\{\nu_m\}_{m=1}^M \subset \Spc X$ (see, for instance, \cite{Unser2017, Unser2019a}).

\subsection{Connection with Prior Works}

The result for Hilbert spaces in Theorem \ref{Theo:GeneralRepSemiBanach}  is well known, although it is rarely described in this form where the parame\-terization is tight.
The result for Banach spaces is new, to the best of our knowledge.
The result for the non-strictly convex case overlaps that of two recent papers by Bredies-Carioni \cite{Bredies2018} and Boyer {\em et al.} \cite{Boyer2018}, in which these authors independently establish the existence of solutions of the form \eqref{Eq:f0extreme2} in similar scenarios.
The novel element here is the statement about the form of all extremal points as well as the identification of the configurations where $p_0$ is unique.
Finally, we are not aware of any prior work (except \cite{Unser2019c}) where these various scenarios have been unified.

For the particular scenario where $\Op T=\Lop^{-1}$ is an isomorphism from
$\Spc M(\R^d)$ (the space of Radon measures on $\R^d$) to $\Spc U'=\Op T\big(\Spc M(\R^d)\big)$
and $\Spc X'=\Spc U' \oplus \Spc N_\V p$ with 
$\|f\|_{\Spc U'}=\|\Lop f\|_{\Spc M}$, we recover the results of
\cite{Unser2017} (when the operator $\Lop$ is spline-admissible) and \cite[Theorem 1]{Flinth2018} 
by observing that the generic form of the extremal points of the unit regularization ball are
$e_k=\Op T\{\delta(\cdot-\V x_k)\}$ with $\V x_k \inR^d$. 
The key there is that the extremal points of the unit ball in $\Spc M(\R^d)$ are
the signed shifted Dirac measures $\{\pm\delta(\cdot-\V x)\}_{\V x \inR}$,
which are then isometrically mapped into $\Spc U'$ in accordance with Proposition \ref{Prop:IsometricIso}.
To make this more concrete, we recall that an operator $\Lop: \Spc X' \to \Spc M(\R^d)$ is called spline-admissible if 
\begin{enumerate}
\item it is linear shift-invariant;
\item it has a finite-dimensional null space
 $\Spc N_\Lop=\{f\in \Spc X':\Lop f=0\}={\rm span}\{p_n\}_{n=1}^{N_0}$;
 \item it admits a Green's function $\rho_\Lop: \R^d \to \R$ (of slow growth) such that $\rho_\Lop=\Lop^{-1}\{\delta\}$.
 \end{enumerate}
In that scenario, we find\footnote{There is one subtle point in the derivation (see \cite{Unser2017}) because
the correct inverse operator $\Op T=(\Identity-\Proj_{\Spc N_\V p})\Lop^{-1}=\Lop^{-1}_{{\V p^\ast}}$ must incorporate a projection onto $\Spc U'$. This projection depends upon the topology. This yields extremal points of the form $e_k=(\rho_{\Lop}(\cdot-\V x_k)-q_k)$ with $q_k={\Proj_{\Spc N_\V p}\{\rho_{\Lop}(\cdot-\V x_k)\}}\in \Spc N_{\V p}$, which translates into $\tilde b_n$
in \eqref{Eq:spline} being different from $b_{n}$ in \eqref{Eq:f0extreme2}.}
that the extremal points of \eqref{Eq:GenericOptimizationProb2} with $\|f\|_{\Spc U'}=\|\Lop f\|_{\Spc M}$ can be represented as
\begin{align}
\label{Eq:spline}
f_0: \R^d \to \R: \V x \mapsto \sum_{n=1}^{N_0} \tilde{b}_n p_n(\V x) + \sum_{k=1}^{K_0} a_k \rho_{\Lop}(\V x-\V x_k)
\end{align}
with $K_0\le (M-N_0)$, 
which is the generic form of a non-uniform $\Lop$-spline with knots at the $\V x_k\inR^d$ \cite{Schultz1967,Schumaker2007}\cite[Chapter 6]{Unser2014book}.
For instance, for $\Lop=\Dop$ (the derivative operator with $d=1$), $N_0=1$ with $p_1=1$ (the constant function),
while $\rho_{\Dop}(x)$ is the unit step (Heaviside function). It follows that
$f_0$ given by \eqref{Eq:spline} is piecewise-constant with jumps at the $x_k\inR$.

\revise{The setting of Theorem 3 can also be extended to the multicomponent scenarios investigated in Section 4. In essence, one can replace regularization norms by semi-norms, which then adds some corresponding null-space component(s) to the generic form of the solution. Hybrid splines, which can be seen
as the continuous-domain counterpart of the multi-dictionary approach of Section 4.2, provide a powerful example of such composite modeling \cite{Debarre2019Hybrid}. Another useful option is the spline variant of Corollary \ref {Theo:L2M}, which adds a smooth component (with a corresponding $L_2$ regularization) to the solution specified by \eqref{Eq:spline}.
However, the fitting of such augmented models to data is trickier: when the intersection of the null spaces is nontrivial,
it requires the specification of additional boundary conditions to ensure that the decomposition is unique  \cite{Debarre2019Hybrid,Debarre2021continuous}.}

\appendix
\section{Proof of the Last Statement in Theorem \ref{Theo:GeneralRepBanach}}
We already mentioned that this characterization can be derived from Theorem 3.1 of Boyer et al. \cite{Boyer2018}
by viewing an extreme point of the solution set as the degenerate case of a face with dimension $j=0$.
The proof presented in  \cite{Boyer2018} relies on an earlier theorem by Klee \cite{Klee1963}, which is itself based on a foundational result by Dubins on the extreme points of the intersection of a convex set and a series of hyperplanes \cite{Dubins1962}.
Here, we have chosen the latter as our starting point in order to simplify the argumentation.
\begin{theorem}[{Main result of \cite{Dubins1962}}]
\label{Theo:Dubins62}
Consider a topological vector space $V$ over the field of real numbers, a closed and bounded convex set $C\subset V$ and  $M$ hyperplanes $H_1,\ldots,H_M\subset V$. Then, any extreme point of $C\cap  \left( \bigcap_{m=1}^M H_m\right)$ can be written as a convex combination of at most $M+1$ extreme points of $C$.
\end{theorem}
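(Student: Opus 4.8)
The plan is to localise the extreme point inside a low-dimensional face of $C$ and then reduce to a finite-dimensional Minkowski--Carath\'eodory argument, which is the essence of the method of Dubins and Klee. Write each hyperplane as $H_m=\{v\in V:\ell_m(v)=c_m\}$ with $\ell_m:V\to\R$ linear and $c_m\inR$, and set $L=\bigcap_{m=1}^M H_m$, an affine subspace, so that the object under study is an extreme point $x$ of the convex set $C\cap L$. Let $F$ be the smallest face of $C$ that contains $x$; by standard convex geometry $x$ then lies in the algebraic relative interior $\mathrm{ri}(F)$ of $F$, and every extreme point of $F$ is automatically an extreme point of $C$.

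The first key step is to bound the affine dimension of $F$ by $M$. I would show that $\mathrm{aff}(F)\cap L=\{x\}$: if there were a point $y\ne x$ in $\mathrm{aff}(F)\cap L$, then, since $x\in\mathrm{ri}(F)$, a small step away from $y$ would give $x'=x+\epsilon(x-y)\in F\subset C$ for some $\epsilon>0$; as $L$ is affine and contains $x$ and $y$, it also contains $x'$, so $x$ would lie strictly between the two points $y,x'\in C\cap L$, contradicting the extremality of $x$ in $C\cap L$. Passing to the direction space $V_F$ of $\mathrm{aff}(F)$, this statement is exactly the injectivity of the linear map $u\mapsto(\ell_1(u),\dots,\ell_M(u))$ on $V_F$, whence $\dim V_F=\dim\mathrm{aff}(F)\le M$.

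With the dimension controlled, I would pass to the finite-dimensional slice $K=\mathrm{aff}(F)\cap C$. Since $\mathrm{aff}(F)$ is finite-dimensional it is closed, and $K$ is a closed and bounded --- hence compact --- convex subset of $\mathrm{aff}(F)\cong\R^{d}$ with $d=\dim\mathrm{aff}(F)\le M$. Minkowski's theorem together with Carath\'eodory's theorem inside $\R^d$ then furnishes a representation $x=\sum_{i=1}^{d+1}\lambda_i v_i$ with $\lambda_i>0$, $\sum_i\lambda_i=1$, and $v_i\in\mathrm{Ext}(K)$, so that $x$ is a convex combination of at most $d+1\le M+1$ extreme points of $K$.

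It remains to upgrade the $v_i$ from extreme points of the slice $K$ to extreme points of $C$, which I expect to be the only genuinely delicate point, since an extreme point of a section of $C$ need not be extremal in $C$. The mechanism is that $F$ is also a face of $K$, being the intersection of the face $F$ of $C$ with the convex subset $K$: because $x\in F$ is expressed as a convex combination with strictly positive weights of the points $v_i\in K$, the defining property of a face forces every $v_i$ to lie in $F$; and a point that is extreme in $K$ and belongs to the subset $F$ is extreme in $F$, hence extreme in $C$ because $F$ is a face of $C$. This yields $x$ as a convex combination of at most $M+1$ extreme points of $C$, as claimed. Besides this transfer of extremality, the other points requiring care are the existence of the smallest face with $x\in\mathrm{ri}(F)$ and the compactness of $K$, both of which are standard once $V$ is taken to be Hausdorff.
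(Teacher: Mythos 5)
This statement is not proved in the paper at all: it is quoted as the main result of Dubins' 1962 article and used as a black box in Appendix~A, so there is no internal proof to compare against. Your argument is, in substance, the classical Dubins--Klee proof: localise $x$ in its minimal face $F$ of $C$, show that the $M$ hyperplane constraints force $\dim \mathrm{aff}(F)\le M$, and finish with Minkowski--Carath\'eodory in the finite-dimensional compact slice $K=\mathrm{aff}(F)\cap C$, transferring extremality back from $K$ to $F$ to $C$ through the face property. The dimension bound via injectivity of $u\mapsto(\ell_1(u),\dots,\ell_M(u))$ on the direction space, the compactness of $K$ (granting Hausdorffness, which you rightly flag), and the two-stage transfer of extreme points are all correct.

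One step needs repair. In proving $\mathrm{aff}(F)\cap L=\{x\}$ you take $y\ne x$ in $\mathrm{aff}(F)\cap L$ and conclude that ``$x$ lies strictly between the two points $y,x'\in C\cap L$''; but $y$ belongs only to the affine hull of $F$ and need not lie in $C$, so this does not yet contradict the extremality of $x$ in $C\cap L$. The fix is immediate: since $x$ is in the algebraic relative interior of $F$ with respect to $\mathrm{aff}(F)$, both $x^{+}=x+\epsilon(x-y)$ and $x^{-}=x-\epsilon(x-y)$ lie in $F\subset C$ for small $\epsilon>0$, both lie in the affine set $L$, they are distinct, and $x=\tfrac{1}{2}\left(x^{+}+x^{-}\right)$, which is the required contradiction. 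Relatedly, the ``standard'' fact you invoke --- that $x$ is interior to its minimal face relative to the \emph{full} affine hull, not merely that each segment from a point of $F$ through $x$ extends past $x$ --- deserves a line of justification in infinite dimensions: the minimal-face property lets one reverse every direction $f-x$ with $f\in F$, so every element of $\mathrm{span}(F-x)$ is a positive combination of such directions, and a short computation then places $x+\epsilon u$ in $F$ for every direction $u$ and small $\epsilon$. With these two points tightened, the proof is complete.
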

For the Banach space $\Spc X'$, we denote the unit ball of size $\beta$ as $B_{\Spc X',\beta}= \{ f\in \Spc X': \|f\|_{\Spc X'} \leq\beta\}$.  It then directly follows from Theorem \ref{Theo:Dubins62} that any extreme point  of 
$$
\Spc S=   B_{\Spc X',\beta} \cap \{ f\in \Spc X': \V \nu (f)=\V y\} 
$$
can be written as a convex combination of at most $M+1$ extreme points of $ B_{\Spc X',\beta}$.  In what follows, we show that, if 
$$
\beta = \min_{f \in \Spc X'} \|f\|_{\Spc X'} \quad \text{s.t.} \quad \V \nu (f)=\V y,
$$
then any extreme points $f_0$ of $\Spc S$ has the expansion   
\begin{equation}\label{Eq:ExtPointExpansion}
f_0 = \sum_{k=1}^{K} c_k f_k, \quad K\leq M,
\end{equation}
where $f_k \in {\rm Ext}(B_{\Spc X',\beta})$, and $c_k>0$ with $\sum_{k=1}^K c_k =1$. 
The connection with \eqref{Eq:f0extreme} is that \eqref{Eq:ExtPointExpansion} is obviously also expressible in terms of the basis vectors
$e_k=f_k/\beta$ which, due to the homogeneity property of the norm, are extremal points of the unit ball in $\Spc X'$. 

Assume by contradiction that $K=M+1$ and that the set $\{f_1,\ldots,f_{M+1}\}$ is linearly independent.   The set of vectors $\{\V \nu(f_1),\ldots,\V \nu(f_{M+1})\}\subseteq \R^M$ is clearly linearly dependent. Hence, there exists  $(\alpha_m)_{m=1}^{M+1} \neq \V 0$ such that 
\begin{equation}\label{Eq:LinDepend}
\V\nu(\sum_{m=1}^{M+1} \alpha_m \V f_m) = \sum_{m=1}^{M+1} \alpha_m \V\nu(f_m) =\V 0.
\end{equation} 
 Denote $A=\sum_{m=1}^{M+1} \alpha_m$ and consider the function $f_{\epsilon}= f_0 + \epsilon \sum_{m=1}^{M+1} \alpha_m f_m$ for $\epsilon \in \mathbb{R}$. On one hand, for all values of $\epsilon$ with $|\epsilon|< \epsilon_{\max}= \frac{\min_m c_m}{\max_m |\alpha_m| }$, the function 
$$
 \frac{f_{\epsilon}}{1+\epsilon A}= \sum_{m=1}^{M+1} \frac{c_m+\epsilon \alpha_m}{1 + \epsilon A}  f_m 
$$
is in the convex hull of $\{f_1,\ldots,f_{M+1}\}$. Consequently, $\|f_{\epsilon}\|_{\Spc X'} \leq |1+\epsilon|\beta$.  
On the other hand, due to \eqref{Eq:LinDepend},   we have
$$
\V \nu( f_{\epsilon}) = \V \nu(f_0) + \epsilon \sum_{m=1}^{M+1} \alpha_m \V y_m = \V y.
$$
Hence, due to the optimality of $f_0$, we  deduce that 
$$ 
|1+\epsilon A| \beta \geq  \|f_{\epsilon}\|_{\Spc X'} \geq \|f_0\|_{\Spc X'} = \beta, \quad \forall \epsilon \in (-\epsilon_{\max},\epsilon_{\max}).
 $$
This yields that $|1+\epsilon A| \geq 1$ for all $\epsilon \in (-\epsilon_{\max},\epsilon_{\max})$, which implies that $A=0$. Consequently, $f_{\epsilon}\in \Spc S$ for all $\epsilon \in (-\epsilon_{\max},\epsilon_{\max})$. Now, since $f_0$ is an extreme point of $\Spc S$, we deduce from $f_0 = \frac{f_{-\epsilon}+f_{\epsilon}}{2}$ that $f_0=f_\epsilon$ for all  $\epsilon \in (-\epsilon_{\max},\epsilon_{\max})$, and hence, $\sum_{m=1}^{M+1} \alpha_m f_m=0$. This is in contradiction with the linear independence of $\{f_1,\ldots,f_{M+1}\}$. 

\revise{\section*{Acknowledgments}
The authors are thankful to Julien Fageot and Matthieu Simeoni
for helpful discussions and advice. }
\bibliographystyle{elsarticle-num}

%
%
%
%
%
%
\bibliography{DirectSum.bib}
\end{document}